\newcommand{\N}{\mathbb N}
\newcommand{\Q}{\mathbb Q}
\numberwithin{equation}{section}
\theoremstyle{plain}
\newtheorem{teo}[equation]{Theorem}
\newtheorem{prop}[equation]{Proposition}
\newtheorem{cor}[equation]{Corollary}
\newtheorem{lema}[equation]{Lemma}
\newtheorem{pre}[equation]{Question}
\newtheorem{defi}[equation]{Definition}
\newtheorem{obs}[equation]{Remark}
\theoremstyle{definition}
\theoremstyle{definition}
\theoremstyle{definition}
\theoremstyle{definition}
\theoremstyle{definition}
\begin{document}
\title[
Symmetric products of the  $\mathfrak{E}_{\mathrm{c}}$ and the $\mathfrak{E}$.]{\scshape\bfseries 
Symmetric products of
 Erd\H{o}s space and complete Erd\H{o}s space.
}

\author{{ \bfseries }Alfredo Zaragoza }

\address{Departamento de Matematicas\\
         Facultad de Ciencias\\
         Universidad Nacional Aut\'onoma de M\'exico\\
        }
         \email{soad151192@icloud.com}

\subjclass[2010]{54B20 54A10 54E50 54F50  54F65.}

\keywords{Erd\H{o}s Space Complete, Erd\H{o}s Space, Cohesive, Hyperspace, almost zero dimensional}

\date{}

\thanks{This work is part of the doctoral work of the author at UNAM, Mexico city, under the direction of R. Hernández-Gutiérrez and A. Tamariz-Mascarúa.
}

\begin{abstract}

It is shown that the symmetric products of complete Erd\H{o}s space and  Erd\H{o}s space are homeomorphic to complete Erd\H{o}s space and Erd\H{o}s space,
respectively. We will also give some properties of their hyperspace of compact subsets with the Vietoris topology.
\end{abstract}

\maketitle

%%%%%%%%%%%%%%%%%%%%%%
\section{Introduction}
%%%%%%%%%%%%%%%%%%%%%%

Every topological space in this article is assumed to be metrizable and separable.
The following two spaces were introduced by Erd\H{o}s in 1940 in \cite{er} as examples of totally disconnected and non-zero dimensional spaces. The Erd\H{o}s space is defined as follows
$$ \mathfrak{E} = \{(x_n)_{n\in \omega} \in  \ell^2 : x_i \in  \Q , \textit{for all } i\in \omega \};$$
and the complete Erd\H{o}s space as 
$$ \mathfrak{E}_{\mathrm{c}} = \{(x_n)_{n\in \omega} \in  \ell^2 : x_i \in  \{0\}\cup \{1/n: n\in \N\} \textit{ for all } i\in \omega \}$$
when $\ell^2$ is the Hilbert space of all square summable real sequences. The name \textit{complete Erd\H{o}s} space was introduced by  Kawamura, Oversteegen, and Tymchatyn \cite{K}. 

Some properties of the complete Erd\H{o}s space and Erd\H{o}s space were studied in 
\cite{j}, \cite{jj}, \cite{J3},  \cite{T} and \cite{K}. For a space $X$, $\mathcal{K}(X)$ denotes the hyperspace of non-empty compact subsets of $X$ with the
Vietoris topology; for any $n\in  \N$, $\mathcal{F}_n(X)$ 
is the subspace of $\mathcal{K}(X)$ consisting
of all the non-empty subsets that have cardinality less or equal to $n$; and $\mathcal{F}(X)$ is the subspace of $\mathcal{K}(X)$ of finite subsets of $X$. 
For $n\in \N$ and subsets $U_1,\ldots, U_n$ of a topological space $X$, we denote by $\langle  U_{1},\ldots ,U_{n}\rangle$ the collection $\left\lbrace   F \in \mathcal{K}(X):F\subset \bigcup_{k=1}^n U_k, F\cap U_{k}\neq \emptyset \textit{ for } k \leq n \right\rbrace $. Recall that the Vietoris topology on $\mathcal{K}(X)$ has as its canonical base all the sets of the form $\langle  U_{1},\ldots ,U_{n}\rangle$ where $U_k$ is a non-empty open subset of $X$ for each $k\leq n$.

We will study the properties that $\mathfrak{E}_c$ and $\mathfrak{E}$ have in common with some of their hyperspaces with the Vietoris topology. 
The main results of this work are:

\begin{teo}\label{ee1}
For any $n\in \N$, $\mathcal{F}_n(\mathfrak{E}_{\mathrm{c}})$ 
is homeomorphic to $\mathfrak{E}_{\mathrm{c}}$.

\end{teo}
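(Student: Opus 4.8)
The plan is to apply the topological characterization of complete Erd\H{o}s space due to Dijkstra and van Mill: a nonempty space is homeomorphic to $\mathfrak{E}_{\mathrm{c}}$ if and only if it is Polish, almost zero-dimensional, and cohesive. Thus I would reduce the theorem to verifying these three properties for $Y:=\mathcal{F}_n(\mathfrak{E}_{\mathrm{c}})$. That $Y$ is nonempty is clear, and that it is Polish is routine: since $\mathfrak{E}_{\mathrm{c}}$ is Polish, so is $\mathcal{K}(\mathfrak{E}_{\mathrm{c}})$ with the Vietoris topology, and $\mathcal{F}_n(\mathfrak{E}_{\mathrm{c}})$ is a closed subspace of $\mathcal{K}(\mathfrak{E}_{\mathrm{c}})$ (the condition $|F|\le n$ is Vietoris-closed), hence Polish. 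The substance lies in the two remaining properties.

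For almost zero-dimensionality I would use the witness-topology formulation: a separable metrizable space is almost zero-dimensional precisely when it carries a weaker zero-dimensional topology in which every point has a neighborhood base (for the original topology) of closed sets. Let $\mathcal{W}$ be the standard zero-dimensional witness topology on $\mathfrak{E}_{\mathrm{c}}$, the one inherited from the product $\prod_{n}(\{0\}\cup\{1/n:n\in\N\})$ of convergent sequences; it is coarser than the $\ell^2$-topology and each $x\in\mathfrak{E}_{\mathrm{c}}$ has an $\ell^2$-neighborhood base of $\mathcal{W}$-closed sets. The weaker Vietoris topology $\mathcal{V}_{\mathcal{W}}$ that $\mathcal{W}$ induces on $Y$ is exactly $\mathcal{F}_n$ of the zero-dimensional space $(\mathfrak{E}_{\mathrm{c}},\mathcal{W})$; since $\mathcal{F}_n$ of a zero-dimensional separable metrizable space embeds in $\mathcal{K}(2^{\omega})$ and is therefore zero-dimensional, $\mathcal{V}_{\mathcal{W}}$ is a weaker zero-dimensional topology on $Y$. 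It then remains to produce, for each $F=\{p_1,\dots,p_m\}\in Y$, a Vietoris-neighborhood base of $\mathcal{V}_{\mathcal{W}}$-closed sets. Given a basic Vietoris neighborhood of $F$, I would choose $\mathcal{W}$-closed $\ell^2$-neighborhoods $C_i$ of the $p_i$ refining it, and observe that $\langle C_1,\dots,C_m\rangle\cap Y$ is $\mathcal{V}_{\mathcal{W}}$-closed, because it equals $\{F':F'\subseteq\bigcup_i C_i\}\cap\bigcap_i\{F':F'\cap C_i\neq\emptyset\}$, an intersection of $\mathcal{W}$-Vietoris-closed sets, while $\langle\mathrm{int}\,C_1,\dots,\mathrm{int}\,C_m\rangle$ witnesses that $F$ lies in its interior.

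For cohesion I would prove the general lemma that $\mathcal{F}_n(X)$ is cohesive whenever $X$ is, and apply it to $X=\mathfrak{E}_{\mathrm{c}}$ (which is cohesive). Fix $F=\{p_1,\dots,p_m\}$ and choose open sets $W_1,\dots,W_m$ with $p_i\in W_i$, with \emph{pairwise disjoint closures}, each $W_i$ containing no nonempty clopen subset of $X$ (possible by cohesion of $X$). Put $\mathcal{V}=\langle W_1,\dots,W_m\rangle\cap\mathcal{F}_n(X)$ and suppose, for contradiction, that $\mathcal{C}\subseteq\mathcal{V}$ is nonempty and clopen in $\mathcal{F}_n(X)$. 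Consider the natural quotient $u\colon X^n\to\mathcal{F}_n(X)$, $u(x_1,\dots,x_n)=\{x_1,\dots,x_n\}$; then $u^{-1}(\mathcal{C})$ is a nonempty clopen subset of $X^n$ contained in $u^{-1}(\mathcal{V})=\{(x_j): \text{each } x_j\in\bigcup_i W_i,\ \text{each } W_i \text{ is met}\}$. Picking $(x_1,\dots,x_n)\in u^{-1}(\mathcal{C})$ and letting $c(j)$ be the index with $x_j\in W_{c(j)}$, the disjointness of the closures forces $u^{-1}(\mathcal{C})\cap\prod_j\overline{W_{c(j)}}=u^{-1}(\mathcal{C})\cap\prod_j W_{c(j)}$, which is therefore clopen in $X^n$, nonempty, and contained in $\prod_j W_{c(j)}\subseteq W_{c(1)}\times X^{n-1}$. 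Slicing shows that a product one of whose factors contains no nonempty clopen subset of $X$ contains no nonempty clopen subset of $X^n$, the desired contradiction.

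The main obstacle is this cohesion step: the difficulty is that cohesion of $X$ controls only clopen sets sitting inside a single small neighborhood, whereas the pullback $u^{-1}(\mathcal{C})$ a priori spreads over the union $\bigcup_i W_i$ and over several preimage configurations. The device that overcomes this is choosing the $W_i$ with disjoint closures, which makes each configuration block $\prod_j W_{c(j)}$ clopen not merely in $u^{-1}(\mathcal{V})$ but in all of $X^n$, so that cohesion of $X^n$ (itself obtained from that of $X$ by slicing) applies directly. By contrast, the Polish and almost zero-dimensional verifications are comparatively mechanical once the witness topology $\mathcal{W}$ is in hand.
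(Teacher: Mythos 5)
The proposal collapses at its very first step: the characterization you invoke --- ``a nonempty space is homeomorphic to $\mathfrak{E}_{\mathrm{c}}$ if and only if it is Polish, almost zero-dimensional and cohesive'' --- is false. The space $\mathfrak{E}_{\mathrm{c}}^{\omega}$ is Polish, almost zero-dimensional and cohesive, yet by the theorem of Dijkstra, van Mill and Steprans that complete Erd\H{o}s space is unstable (reference \cite{J3} of this paper) it is \emph{not} homeomorphic to $\mathfrak{E}_{\mathrm{c}}$. The paper itself tacitly records the insufficiency of these three properties: it notes that $\mathcal{K}(\mathfrak{E}_{\mathrm{c}})$ is Polish, $AZD$ and cohesive and still leaves open whether it is homeomorphic to $\mathfrak{E}_{\mathrm{c}}$ or to $\mathfrak{E}_{\mathrm{c}}^{\omega}$; under your characterization that question would be trivial. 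The actual characterization (Theorem \ref{EC}, from Dijkstra--van Mill) demands, in place of ``Polish $+$ $AZD$'', the strictly stronger property $C_1$: a coarser zero-dimensional topology $\mathcal{W}$ such that every point has a neighborhood in the original topology that is \emph{compact} with respect to $\mathcal{W}$. Your verification only produces $\mathcal{W}$-\emph{closed} basic neighborhoods $\langle C_1,\dots,C_m\rangle\cap Y$, and Polishness of $Y$ does not upgrade closedness to the required $\mathcal{W}$-compactness.

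The repair is local but essential. In $\mathfrak{E}_{\mathrm{c}}$ the sets $C_i$ can be chosen $\mathcal{W}$-compact (closed $\ell^2$-balls are closed in the compact product $\prod_{i}(\{0\}\cup\{1/n:n\in\N\})$, hence $\mathcal{W}$-compact); then $\langle C_1,\dots,C_m\rangle\cap\mathcal{F}_n(\mathfrak{E}_{\mathrm{c}})$ is a closed subset of $\mathcal{F}_n(C_1\cup\dots\cup C_m)$, which is compact in the $\mathcal{W}$-Vietoris topology, so each $F$ acquires a $\mathcal{V}_{\mathcal{W}}$-compact Vietoris neighborhood. This is precisely the paper's Proposition \ref{Pc1}, and it is the step your outline is missing. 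Your cohesion lemma, on the other hand, is correct and is a genuinely different (and more self-contained) route than the paper's, which deduces cohesion of $\mathcal{F}_n(X)$ from Knaster's one-point connectification criterion; your device of taking the $W_i$ with pairwise disjoint closures so that each configuration block $\prod_j W_{c(j)}$ is clopen in $X^n$, followed by slicing, works as stated. With the compactness step restored, the argument would go through via Theorem \ref{EC}.
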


\begin{teo}\label{EE1}
For any $n\in \N$, $\mathcal{F}_n(\mathfrak{E})$ 
is homeomorphic to $\mathfrak{E}$.

\end{teo}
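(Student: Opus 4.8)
The plan is to imitate the proof of Theorem \ref{ee1}, replacing the topological characterization of $\mathfrak{E}_{\mathrm{c}}$ by the corresponding characterization of Erd\H{o}s space. By the Dijkstra--van Mill characterization of $\mathfrak{E}$ (the non-complete counterpart of the criterion used for Theorem \ref{ee1}), it suffices to show that $\mathcal{F}_n(\mathfrak{E})$ is (i) almost zero-dimensional, (ii) cohesive, and (iii) of the same absolute Borel class as $\mathfrak{E}$, i.e.\ an $F_{\sigma\delta}$-space no nonempty open subset of which is Polish. I would verify these three properties in turn. Since (i)--(ii) are purely topological, they can be handled exactly as in the complete case; the new ingredient compared with Theorem \ref{ee1} is condition (iii), which detects the completeness deficiency distinguishing $\mathfrak{E}$ from $\mathfrak{E}_{\mathrm{c}}$.

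For (i), recall that almost zero-dimensionality of $\mathfrak{E}$ is witnessed by the coarser zero-dimensional topology $w$ that $\mathfrak{E}$ inherits as a subspace of $\mathbb{Q}^{\omega}$ with the product topology: every point of $\mathfrak{E}$ has a neighbourhood basis of sets that are closed in $w$ (in fact C-sets). I would pass to the hyperspace by comparing two Vietoris topologies on $\mathcal{F}_n(\mathfrak{E})$, the genuine one and the one generated by $w$. Since $\mathcal{F}_n$ of a separable metrizable zero-dimensional space is again zero-dimensional, the $w$-Vietoris topology is a zero-dimensional topology on $\mathcal{F}_n(\mathfrak{E})$ coarser than the genuine one. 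The core lemma to prove is that it witnesses almost zero-dimensionality: given a basic Vietoris neighbourhood $\langle U_1,\ldots,U_k\rangle$ of a finite set $F=\{x_1,\ldots,x_k\}$, one shrinks each $U_j$ to a C-set neighbourhood $C_j$ of $x_j$ and checks that a Vietoris set built from the $C_j$ is a neighbourhood of $F$ closed in the $w$-Vietoris topology. The point is that the operations defining the Vietoris basic sets send C-sets of $\mathfrak{E}$ to C-sets of $\mathcal{F}_n(\mathfrak{E})$.

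For (ii), cohesion of $\mathfrak{E}$ furnishes an open cover $\mathcal{U}$ no member of which contains a nonempty clopen subset of $\mathfrak{E}$; I would cover $\mathcal{F}_n(\mathfrak{E})$ by the Vietoris sets $\langle U_1,\ldots,U_k\rangle$ with $U_j\in\mathcal{U}$ and argue that a nonempty clopen subset of $\mathcal{F}_n(\mathfrak{E})$ contained in such a set would yield, by restricting to suitable coordinates, a nonempty clopen subset of $\mathfrak{E}$ inside some $U_j$, a contradiction. For (iii), I would use that $\mathfrak{E}$ is an $F_{\sigma\delta}$-space together with the fact that the passage to $\mathcal{K}(\cdot)$, and hence to the closed subspace $\mathcal{F}_n(\cdot)$, keeps the space within the absolute multiplicative class $\Pi^0_3=F_{\sigma\delta}$; this shows $\mathcal{F}_n(\mathfrak{E})$ is $F_{\sigma\delta}$. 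Nowhere-Polishness would follow by noting that any nonempty open subset of $\mathcal{F}_n(\mathfrak{E})$ contains a basic Vietoris set meeting the copy $\mathcal{F}_1(\mathfrak{E})\cong\mathfrak{E}$, so a Polish open subset would produce a Polish open subset of $\mathfrak{E}$, which is impossible.

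The main obstacle is condition (i): one must manufacture the witnessing zero-dimensional topology on the symmetric product and verify that the delicate ``neighbourhood basis of C-sets'' structure survives the hyperspace operation, because a C-set neighbourhood of a finite set must be assembled from C-set neighbourhoods of its points in a way that remains a C-set for the global Vietoris topology. A secondary difficulty is the descriptive-set-theoretic bookkeeping in (iii): one must confirm both that the hyperspace stays in the class $F_{\sigma\delta}$ and that it acquires no Polish open piece, since it is precisely this completeness deficiency that separates $\mathfrak{E}$ from $\mathfrak{E}_{\mathrm{c}}$ and prevents the argument from collapsing into the already-established Theorem \ref{ee1}.
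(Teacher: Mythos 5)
The approach does not go through: it rests on a characterization of $\mathfrak{E}$ that is not a theorem. You propose that a nonempty space is homeomorphic to $\mathfrak{E}$ as soon as it is almost zero-dimensional, cohesive, and an absolute $F_{\sigma\delta}$ with no nonempty Polish open subset. No such ``intrinsic'' characterization is available, and the analogous naive list already fails provably in the complete case: $\mathfrak{E}_{\mathrm{c}}^{\omega}$ is a cohesive, almost zero-dimensional, nowhere locally compact Polish space that is \emph{not} homeomorphic to $\mathfrak{E}_{\mathrm{c}}$ (\cite{J3}); this is exactly why Theorem \ref{EC} requires the witness-topology condition $C_1$ rather than just ``Polish''. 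The characterization of $\mathfrak{E}$ that actually exists, and that the paper uses (Theorem \ref{EE}, from \cite{jj}), demands much more than your (i)--(iii): one must exhibit a tree $T$ over a countable alphabet and a system $(E_s)_{s\in T}$ of sets closed in the witness topology satisfying the union condition (1), the anchor condition (2), the nowhere-density condition (3), and $\{E_s\}$-cohesion (4). The entire substance of the paper's proof is the transport of such a system from $\mathfrak{E}$ to $\mathcal{F}_n(\mathfrak{E})$: form the product tree $T^n$ and the sets $E_{s_1}\times\cdots\times E_{s_n}$ (Lemma \ref{at}), then push everything forward along the symmetrization map $g:\mathfrak{E}^n\to\mathcal{F}_n(\mathfrak{E})$, checking that anchors survive $g$ and that nowhere density survives via $g^{\leftarrow}\bigl[g[E_{t_1}\times\cdots\times E_{t_n}]\bigr]=\bigcup_{h}E_{t_{h(1)}}\times\cdots\times E_{t_{h(n)}}$ and Lemma \ref{dnp}. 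Your items (i) and (ii) do correspond to real steps in the paper (Proposition \ref{HZD} and Proposition \ref{A2}), but they supply only the witness topology and the cohesion condition; nothing in your outline produces the tree system, and your condition (iii) cannot substitute for it.

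There is also a concrete error inside (iii): the claim that ``the passage to $\mathcal{K}(\cdot)$ keeps the space within the absolute class $\Pi^0_3$'' is false. The paper itself points out that $\mathcal{K}(\Q)$, and hence $\mathcal{K}(\mathfrak{E})$, is not even Borel. For $\mathcal{F}_n(X)$ one can salvage the Borel-class computation by using that $g:X^n\to\mathcal{F}_n(X)$ is a perfect surjection rather than by passing through $\mathcal{K}(X)$; but since (iii) is not part of a valid characterization of $\mathfrak{E}$, repairing this step would not repair the proof.
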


Note that if $X$ is a separable metrizable space, then they every subspace of $\mathcal{K}(X)$ is also a separable metrizable space (see \cite[Theorem 3.3 and Propositions 4.4 and 4.5.2]{Sub})

%%%%%%%%%%%%%%%%%%%%%%%%%%%%%%%%%%
\section{ Cohesive and almost zero dimension spaces}
%%%%%%%%%%%%%%%%%%%%%%%%%%%%%%%%%%
An important property the of spaces $\mathfrak{E}$ and $\mathfrak{E}_{\mathrm{c}}$ is that at each point there is a local base such that each element of the base is an intersection of clopen sets. A space with this property is called
\textit{almost zero-dimensional} $(AZD)$. The fallowing result is well known. 

\begin{prop}[{\cite[Remark 2.4]{jj}}]\label{eazd} 
A topological space $(X,\mathcal{T})$ is almost zero-dimensional if
there is a zero-dimensional topology $\mathcal{W}$ in $X$ such that $\mathcal{W}$ is coarser than $\mathcal{T}$ and has the property
that every point in $X$ has a local neighborhood base consisting of sets that are closed with respect
to $\mathcal{W}$.

\end{prop}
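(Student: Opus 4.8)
The plan is to establish the stated condition as a characterization of almost zero-dimensionality: I would prove the asserted implication directly from the definition, and then observe that the same circle of ideas yields the converse, so that the equivalence is complete. Throughout, call a subset of $X$ a \emph{C-set} if it is an intersection of $\mathcal{T}$-clopen sets; by the definition recalled above, $(X,\mathcal{T})$ is almost zero-dimensional exactly when every point has a $\mathcal{T}$-local base consisting of C-sets.

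First I would record an elementary transfer principle. Since $\mathcal{W}$ is coarser than $\mathcal{T}$, every $\mathcal{W}$-open set is $\mathcal{T}$-open and, taking complements, every $\mathcal{W}$-closed set is $\mathcal{T}$-closed; consequently any set that is clopen in $\mathcal{W}$ is clopen in $\mathcal{T}$. The second ingredient is the standard fact that in a zero-dimensional space the clopen sets form a base, so every $\mathcal{W}$-closed set is an intersection of $\mathcal{W}$-clopen sets: if $C$ is $\mathcal{W}$-closed then $X\setminus C$ is a union of $\mathcal{W}$-clopen sets $V_{\alpha}$, and $C=\bigcap_{\alpha}(X\setminus V_{\alpha})$ exhibits $C$ as such an intersection.

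Combining the two observations gives the implication. Fix $x\in X$ and a $\mathcal{T}$-local base $\{B_{i}\}$ at $x$ consisting of $\mathcal{W}$-closed sets, as hypothesized. Each $B_{i}$ is an intersection of $\mathcal{W}$-clopen sets, and each $\mathcal{W}$-clopen set is $\mathcal{T}$-clopen by the transfer principle; hence each $B_{i}$ is a C-set. Thus every point has a $\mathcal{T}$-local base of C-sets, and $X$ is almost zero-dimensional.

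For the converse I would take $\mathcal{W}$ to be the topology generated by the family of all $\mathcal{T}$-clopen sets, which is a base because that collection is closed under finite intersections; this $\mathcal{W}$ is coarser than $\mathcal{T}$, and it is zero-dimensional since each basic set is also $\mathcal{W}$-closed, its $\mathcal{T}$-clopen complement being again $\mathcal{W}$-open. If $X$ is almost zero-dimensional, then the C-sets in an AZD local base at a point are intersections of $\mathcal{W}$-clopen, hence $\mathcal{W}$-closed, sets, so they are $\mathcal{W}$-closed, yielding the required base. I do not expect a genuine obstacle here: the only point demanding care is the direction of the clopen-set implications under the refinement $\mathcal{W}\subseteq\mathcal{T}$, together with the correct identification of $\mathcal{W}$-closed sets as intersections of $\mathcal{W}$-clopen sets.
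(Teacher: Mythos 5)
The paper itself gives no proof of this proposition --- it is imported verbatim from Dijkstra--van Mill \cite[Remark 2.4]{jj} and used as a black box --- so there is nothing internal to compare against. Your argument for the implication actually asserted (witness topology $\Rightarrow$ almost zero-dimensional) is correct and is the standard one: $\mathcal{W}$-clopen sets are $\mathcal{T}$-clopen because $\mathcal{W}\subseteq\mathcal{T}$, a $\mathcal{W}$-closed set is an intersection of $\mathcal{W}$-clopen sets because $\mathcal{W}$ has a clopen base, and therefore the hypothesized $\mathcal{W}$-closed local base is a local base of C-sets. That is a complete proof of the statement as printed.

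The converse you volunteer, however, has a genuine gap. Taking $\mathcal{W}$ to be the topology generated by \emph{all} $\mathcal{T}$-clopen sets does give a coarser topology with a base of $\mathcal{W}$-clopen sets, and the AZD local bases are $\mathcal{W}$-closed as you say; but in this paper (and in \cite{jj}) every space, including the witness $(X,\mathcal{W})$, is required to be separable metrizable, and your $\mathcal{W}$ need not be second countable: the clopen algebra of an AZD space can have size continuum with no countable subfamily generating the same topology a priori. (Hausdorffness of $\mathcal{W}$ also needs the small argument that clopen sets separate points, which follows from AZD plus $T_1$, but you should say so.) The actual proof of the equivalence in \cite{jj} gets around this by selecting a \emph{countable} family of clopen sets whose finite intersections still give neighborhood bases at every point --- a Lindel\"of-type selection that is the real content of the converse. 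Since the proposition only claims the ``if'' direction, this does not invalidate what you needed, but the sentence ``the same circle of ideas yields the converse'' oversells the sketch.
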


If $(X,\mathcal{T})$ is an $AZD$ space, we say that the topology $\mathcal{W}$ which appears in Proposition \ref{eazd} witnesses the almost zero-dimensionality of $X$. Every zero-dimensional space is an $AZD$ space and the property of being $AZD$ is hereditary. On the other hand it is proved in \cite{Lh} that all AZD spaces have dimension less than or equal to one. Moreover, it is proved in \cite{jj} that all $AZD$ spaces are embeddable both in $\mathfrak{E}_c$ and $\mathfrak{E}$.

\begin{prop}\label{HZD}
For a topological space $X$ the following statements are equivalent.
\begin{enumerate}
\item $X$ is an $AZD$ space.
%almost zero dimensional. 
\item $\mathcal{K}(X)$ is an $AZD$ space.
%is almost zero dimensional.
\item If $\mathcal{F}_1(X)\subset\mathcal{A}\subset \mathcal{K}(X)$, then $\mathcal{A}$ is an $AZD$ space.

\end{enumerate}
\end{prop}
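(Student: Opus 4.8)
The plan is to establish the cycle of implications $(1)\Rightarrow(2)\Rightarrow(3)\Rightarrow(1)$. Two of these are immediate. For $(2)\Rightarrow(3)$, any $\mathcal{A}$ with $\mathcal{F}_1(X)\subseteq\mathcal{A}\subseteq\mathcal{K}(X)$ is a subspace of $\mathcal{K}(X)$, so it is $AZD$ because almost zero-dimensionality is hereditary. For $(3)\Rightarrow(1)$, I would apply $(3)$ to $\mathcal{A}=\mathcal{F}_1(X)$ and use that $x\mapsto\{x\}$ is a homeomorphism of $X$ onto $\mathcal{F}_1(X)$; hence $X\cong\mathcal{F}_1(X)$ is $AZD$. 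Thus the whole content lies in $(1)\Rightarrow(2)$, which I would prove via the criterion of Proposition \ref{eazd}.

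So assume $X$ is $AZD$ and let $\mathcal{W}$ be a zero-dimensional topology on $X$ witnessing this, i.e. $\mathcal{W}\subseteq\mathcal{T}$ and each $x\in X$ has a $\mathcal{T}$-neighborhood base of $\mathcal{W}$-closed sets. Since $\mathcal{W}\subseteq\mathcal{T}$, every $\mathcal{T}$-compact set is $\mathcal{W}$-compact, so the Vietoris topology $\mathcal{V}_{\mathcal{W}}$ induced by $\mathcal{W}$ is a well-defined topology on the underlying set of $\mathcal{K}(X)$, and it is coarser than the Vietoris topology $\mathcal{V}_{\mathcal{T}}$ because every $\mathcal{W}$-open set is $\mathcal{T}$-open. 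The first key step is to check that $(\mathcal{K}(X),\mathcal{V}_{\mathcal{W}})$ is zero-dimensional. Writing $U^{+}=\{K:K\subseteq U\}$ and $U^{-}=\{K:K\cap U\neq\emptyset\}$, if $U$ is $\mathcal{W}$-clopen then both $U^{+}$ and $U^{-}$ are $\mathcal{V}_{\mathcal{W}}$-clopen, because their complements are $(X\setminus U)^{-}$ and $(X\setminus U)^{+}$ respectively and $X\setminus U$ is again $\mathcal{W}$-clopen. Hence $\langle U_1,\ldots,U_n\rangle=\big(\bigcup_k U_k\big)^{+}\cap\bigcap_k U_k^{-}$ is $\mathcal{V}_{\mathcal{W}}$-clopen whenever the $U_k$ are $\mathcal{W}$-clopen, and since the $\mathcal{W}$-clopen sets form a base for $\mathcal{W}$, a routine compactness argument shows that such $\langle U_1,\ldots,U_n\rangle$ form a base for $\mathcal{V}_{\mathcal{W}}$; thus $\mathcal{V}_{\mathcal{W}}$ is zero-dimensional.

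It remains to produce, at each $K\in\mathcal{K}(X)$, a $\mathcal{V}_{\mathcal{T}}$-neighborhood base consisting of $\mathcal{V}_{\mathcal{W}}$-closed sets. The building block is the observation that if $C_1,\ldots,C_m$ are $\mathcal{W}$-closed then $\langle C_1,\ldots,C_m\rangle=\big(\bigcup_j C_j\big)^{+}\cap\bigcap_j C_j^{-}$ is $\mathcal{V}_{\mathcal{W}}$-closed, since for $\mathcal{W}$-closed $C$ both $C^{+}$ and $C^{-}$ are $\mathcal{V}_{\mathcal{W}}$-closed. Given a basic $\mathcal{V}_{\mathcal{T}}$-neighborhood $\langle U_1,\ldots,U_n\rangle$ of $K$ (with $U_i\in\mathcal{T}$, $K\subseteq\bigcup_i U_i$, $K\cap U_i\neq\emptyset$), I would first pick for each $i$ a point $y_i\in K\cap U_i$ and, using the witnessing local base at $y_i$, a $\mathcal{W}$-closed $\mathcal{T}$-neighborhood $C_i'\subseteq U_i$ of $y_i$; this guarantees every $U_i$ is represented. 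Then for each $x\in K$ I would choose a $\mathcal{W}$-closed $\mathcal{T}$-neighborhood $D_x$ of $x$ contained in some $U_i$, and use compactness of $K$ to extract finitely many $D_x$ whose $\mathcal{T}$-interiors, together with the interiors of the $C_i'$, cover $K$. Collecting these finitely many $\mathcal{W}$-closed sets as $C_1,\ldots,C_m$, one checks that $\langle \operatorname{int}_{\mathcal{T}}C_1,\ldots,\operatorname{int}_{\mathcal{T}}C_m\rangle$ is a $\mathcal{V}_{\mathcal{T}}$-open neighborhood of $K$ contained in the $\mathcal{V}_{\mathcal{W}}$-closed set $\langle C_1,\ldots,C_m\rangle$, which in turn is contained in $\langle U_1,\ldots,U_n\rangle$ (the inclusion $\bigcup_j C_j\subseteq\bigcup_i U_i$ handles the $+$ condition, and the retained $C_i'\subseteq U_i$ handle the $-$ conditions). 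By Proposition \ref{eazd} this shows $\mathcal{K}(X)$ is $AZD$, completing $(1)\Rightarrow(2)$.

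I expect the main obstacle to be the last construction: one must simultaneously arrange that the chosen family of $\mathcal{W}$-closed sets yields a genuine $\mathcal{V}_{\mathcal{T}}$-neighborhood of $K$ (so each set must meet $K$ in its interior and their interiors must cover $K$) and that the resulting $\langle C_1,\ldots,C_m\rangle$ stays inside the prescribed basic set (so each set must sit inside some $U_i$ and every $U_i$ must be represented). Balancing the covering requirement against the containment requirement, via compactness of $K$, is the delicate point; the purely hyperspace-theoretic facts about $^{+}$ and $^{-}$ of clopen and closed sets are then routine.
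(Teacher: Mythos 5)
Your proposal is correct and follows essentially the same route as the paper: the easy implications are handled identically, and for $(1)\Rightarrow(2)$ both arguments equip $\mathcal{K}(X)$ with the Vietoris topology induced by the witnessing topology $\mathcal{W}$, observe it is coarser and zero-dimensional, and build a local base of $\mathcal{V}_{\mathcal{W}}$-closed sets $\langle C_1,\ldots,C_m\rangle$ inside a given $\langle U_1,\ldots,U_n\rangle$ by combining finitely many $\mathcal{W}$-closed neighborhoods that cover $K$ with one extra set per $U_i$ to preserve the hitting conditions. The only cosmetic difference is that you prove the two hyperspace facts (zero-dimensionality of the induced Vietoris topology and closedness of $\langle C_1,\ldots,C_m\rangle$ for closed $C_j$) directly via the $U^{+}/U^{-}$ calculus, where the paper cites Michael.
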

\begin{proof}

The implication $(2)\Rightarrow (3)$ is obvious, and $(3)\Rightarrow (1)$ follows from the fact that $\mathcal{F}_1(X)$ is homeomorphic to $X$.
\\ 
$(1)\Rightarrow (2) :$ We are going to prove that $\mathcal{K}(X)$ 
satisfies the conditions of Proposition \ref{eazd}.
Let $\mathcal{W}$ be a topology which witnesses the almost zero-dimensionality of $X$. 
Consider the space $Y=(X, \mathcal{W})$. As $\mathcal{W}$ is coarser than the topology of $X$, then $\mathcal{K}(X)\subset \mathcal{K}(Y)$. Let $(Z,\mathcal{W}_0)$ be the space $\mathcal{K}(X)$ considered with the topology inherited as a subspace of $\mathcal{K}(Y)$. Since
$\langle V_1,\ldots, V_n\rangle \cap Z$ is an open subset of $\mathcal{K}(X)$, when $V_1, \ldots, V_n$ are elements in $\mathcal{W}$, 
we have that the topology $\mathcal{W}_0$ of $Z$ is coarser than the topology of the $\mathcal{K}(X)$.
Moreover, by Proposition 4.13.1 in \cite{Sub}, $(Z,\mathcal{W}_0)$ is zero-dimensional.
Now, we are going to prove that each element in $\mathcal{K}(X)$ has a local neighborhood  base consisting of closed subsets in $(Z,\mathcal{W}_0)$. Let $F\in\mathcal{K}(X)$ and let $\mathcal{U}= \langle U_1,\ldots, U_m\rangle$ be
a canonical open subset of $\mathcal{K}(X)$ such that $F\in \mathcal{U}$. 
 For each $x\in F$ there is a neighborhood $V_x$ of $x$ in $X$ such that $x \in V_x\subset \bigcap\{U_j: x\in U_j\}$ and $V_x$ is closed in $Y$. Then $\{int_X(V_x):x\in F\}$ is an open cover of $F$ in $X$. As $F$ is a compact subset of $X$, there exist $x_1,\ldots, x_k\in F$ such that $F\subset \bigcup_{i=1}^k V_{x_i}$. For each $i\leq m$, let $y_i\in F\cap U_i$. Note that $F \in \langle V_{x_1},\ldots, V_{x_k}, V_{y_1}, \ldots, V_{y_m}\rangle$. Let us see that $\mathcal{V}_{\mathcal{U}}:=\langle V_{x_1},\ldots, V_{x_k}, V_{y_1}, \ldots, V_{y_m}\rangle\cap \mathcal{K}(X)\subset \mathcal{U}$. Indeed, let $H\in \mathcal{V}_{\mathcal{U}}$. Then $H\subset \bigcup_{i=1}^k V_{x_i} \cup \bigcup_{j=1}^m V_{y_j}$ and $H\cap V_{z}\neq\emptyset$ for $z\in \{x_1,\ldots, x_k, y_1,\ldots y_m\}$. 
By the choise of $V_{x_i}$ and $V_{y_j}$, we have $  \bigcup_{i=1}^k V_{x_i} \cup \bigcup_{j=1}^m V_{y_j}\subset \bigcup_{i\leq m} U_{i} $, and for each $j\leq m$ there exists a $z\in \{ x_1,\ldots, x_k, y_1,\ldots y_m\}$ such that $V_z\subset U_j$. Then $H\subset \bigcup_{i\leq m} U_{i}$ and $H\cap U_i\neq \emptyset$ for $i\leq m$. Thus $\mathcal{V}_{\mathcal{U}}\subset \mathcal{U}$, moreover $\langle V_{x_1},\ldots V_{x_k}\rangle$ is a closed subset in $\mathcal{K}(Y)$ (see \cite[Lemma 2.3.2 , p. 156]{Sub}), so $\langle V_{x_1},\ldots V_{x_k}\rangle \cap Z$ is a closed subset of $Z$.
Therefore, the collection of all the sets $\mathcal{V}_{\mathcal{U}}$ where $\mathcal{U}$ is a canonical open set of $\mathcal{K}(X)$ containing $F$, form a neighborhood local base of $F$ consisting of closed sets in $(Z, \mathcal{W}_0)$. Hence, by Proposition \ref{eazd}, $\mathcal{K}(X)$ is an $AZD$ space.

\end{proof}
The following Corollary follows from Proposition 4.13.1 in \cite{Sub} and Proposition \ref{HZD}, above. 
\begin{cor}\label{azc}
Let $X$ be an $AZD$ space, then $dim(\mathcal{K}(X))\leq 1$ and $dim(\mathcal{K}(X))= 1$ if and only if $dim(X)=1$.
\end{cor}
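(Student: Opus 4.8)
The plan is to deduce the corollary by combining the two external facts quoted in the text—that every $AZD$ space has covering dimension at most one \cite{Lh}, and that $\mathcal{K}$ sends zero-dimensional spaces to zero-dimensional spaces (Proposition 4.13.1 in \cite{Sub})—with the equivalence $(1)\Leftrightarrow(2)$ of Proposition \ref{HZD} and the monotonicity of dimension on separable metrizable spaces. Throughout I use that, under the standing assumption that every space is separable and metrizable, the covering, small inductive, and large inductive dimensions coincide and are monotone under passage to subspaces.

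First I would establish the inequality $\dim(\mathcal{K}(X))\le 1$. Since $X$ is $AZD$, the implication $(1)\Rightarrow(2)$ of Proposition \ref{HZD} gives that $\mathcal{K}(X)$ is also $AZD$, and then the result of \cite{Lh} yields $\dim(\mathcal{K}(X))\le 1$ at once. Applying the same result of \cite{Lh} to $X$ itself gives $\dim(X)\le 1$, so both $\dim(X)$ and $\dim(\mathcal{K}(X))$ lie in $\{-1,0,1\}$; this reduces the remaining equivalence to a statement about when each of these two numbers equals exactly $1$.

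For the equivalence I would argue the two implications separately. If $\dim(X)=1$, then since $\mathcal{F}_1(X)$ is homeomorphic to $X$ and embeds as a subspace of $\mathcal{K}(X)$, monotonicity of dimension gives $\dim(\mathcal{K}(X))\ge\dim(\mathcal{F}_1(X))=1$; combined with the upper bound just proved, this forces $\dim(\mathcal{K}(X))=1$. Conversely I would argue by contraposition: if $\dim(X)\neq 1$, then $\dim(X)\le 0$, so $X$ is zero-dimensional (or empty), and Proposition 4.13.1 in \cite{Sub} makes $\mathcal{K}(X)$ zero-dimensional (or empty); hence $\dim(\mathcal{K}(X))\le 0\neq 1$. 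The two implications together give $\dim(\mathcal{K}(X))=1$ if and only if $\dim(X)=1$.

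I do not expect a serious obstacle here, as the statement is essentially a bookkeeping assembly of facts already in place. The only points that demand minor care are the appeal to monotonicity of dimension in the lower-bound step—legitimate precisely because every space in sight is separable and metrizable, as recorded after Theorem \ref{EE1}—and the handling of the degenerate cases where $X$ is empty or a single point, in which the asserted equivalence holds vacuously.
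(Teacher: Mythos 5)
Your proposal is correct and follows essentially the route the paper intends: the paper gives no written proof beyond the remark that the corollary follows from Proposition 4.13.1 of \cite{Sub} and Proposition \ref{HZD}, and your argument is exactly the natural expansion of that remark, adding only the (already cited) fact from \cite{Lh} that $AZD$ spaces have dimension at most one and the monotonicity of dimension via the embedding $\mathcal{F}_1(X)\subset\mathcal{K}(X)$. No gaps.
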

The omission of the hypothesis of almost zero-dimensionality on $X$ in the previous corollary produces the following natural question.

\begin{pre}
Is there a space $ X $ that is not $ AZD $ such that $ dim (X) = dim(\mathcal{K}(X))= 1$?
\end{pre}

In \cite{er} Erd\H{o}s proved that every point in $ \mathfrak{E} $ has a neighborhood that does not contain (nonempty) clopen sets. Later in \cite{jj} J.J Dijkstra and J. van Mill formalized this concept as follows.

\begin{defi}[{\cite[Definition 5.1]{jj}}] 
Let $X$ be a space and let $\mathcal{A}$ be a collection of subsets of $X$.
The space $X$ is called $\mathcal{A}$-cohesive if every point of the space has a neighborhood
that does not contain nonempty proper clopen subsets of any element of $\mathcal{A}$. If a space $X$
is $\{X\}$-cohesive then we simply call $X$ cohesive.
\end{defi}

Note that all connected spaces are cohesive and that the dimension of a cohesive space is greater than or equal to one. 
If $Y$ is any space and $X$ is a space that is $ \mathcal{A} $ - cohesive then $X \times Y$ is  $\{A \times B: A \in \mathcal{A}$ and $ B \subset Y \}$-cohesive (see {\cite[Remark 5.2, p. 21]{jj}}). Particularly if $ X $ is $ \{A_s:s\in S\} $ - cohesive, then $ X^n $ is $ \{A_{s_1}\times \ldots \times A_{s_n}: s_j \in S\} $-cohesive for any $ n \in \N $.

The following result relates the cohesion property between a space $ X $ and its symmetric products.

\begin{prop}\label{A2}
Let $n\in \N$ and let $X$ be a space $\{A_s: s\in S\}$-cohesive and $f:X^n\to \mathcal{F}_n(X)$ the function given by $f(x_1,\ldots,x_n)=\{x_1,\ldots x_n\}$. Then $\mathcal{F}_n(X)$ is $\{f[A_{s_1}\times \ldots \times A_{s_{n}}]: s_1,\ldots, s_n\in S\}$-cohesive. 

\end{prop}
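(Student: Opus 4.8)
The plan is to exhibit, for an arbitrary $F\in\mathcal F_n(X)$, an explicit witnessing neighborhood and to transport any hypothetical bad clopen set back to $X^n$ through $f$. Write $F=\{p_1,\dots,p_k\}$ with the $p_i$ distinct and $k\le n$. For each $i\le k$ pick a neighborhood $V_i$ of $p_i$ witnessing the $\{A_s:s\in S\}$-cohesion of $X$, and then, using metrizability, shrink to open sets $U_i\subseteq V_i$ with $p_i\in U_i$ that have \emph{pairwise disjoint closures} (a subset of a witness is again a witness, so each $U_i$ still witnesses cohesion at $p_i$). Set $\mathcal U=\langle U_1,\dots,U_k\rangle\cap\mathcal F_n(X)$; since $p_i\in U_i$ and $F\cap U_i=\{p_i\}$, this is an open neighborhood of $F$. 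I would then claim that $\mathcal U$ witnesses the asserted cohesion, i.e.\ that for all $s_1,\dots,s_n\in S$ the set $\mathcal U$ contains no nonempty proper clopen subset of $B:=f[A_{s_1}\times\cdots\times A_{s_n}]$.

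Fix $s_1,\dots,s_n$, put $P=A_{s_1}\times\cdots\times A_{s_n}$ and let $g=f|_{P}\colon P\to B$, a continuous surjection. Suppose toward a contradiction that $\mathcal C$ is clopen in $B$ with $\emptyset\neq\mathcal C\subseteq\mathcal U$ and $\mathcal C\neq B$. Then $D:=g^{-1}[\mathcal C]$ is, by continuity and surjectivity of $g$, a nonempty proper clopen subset of $P$ satisfying $D\subseteq g^{-1}[\mathcal U]=f^{-1}[\langle U_1,\dots,U_k\rangle]\cap P$. Because the $U_i$ are pairwise disjoint, a tuple lies in $f^{-1}[\langle U_1,\dots,U_k\rangle]$ exactly when each coordinate falls into some $U_i$ and every $U_i$ is used; hence this preimage is the disjoint union, over the surjections $c\colon\{1,\dots,n\}\to\{1,\dots,k\}$, of the boxes $\prod_{j=1}^n U_{c(j)}$, and therefore $D\subseteq\bigsqcup_c E_c$ where $E_c:=\prod_{j=1}^n\bigl(A_{s_j}\cap U_{c(j)}\bigr)$.

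The key technical point is that each piece $D_c:=D\cap E_c$ is clopen in $P$. This is exactly where the disjoint closures enter: they force the boxes $\prod_j U_{c(j)}$ to have pairwise disjoint closures, so on $\bigcup_c\prod_j U_{c(j)}$ each box is clopen and one gets the identity $D\cap\prod_j U_{c(j)}=D\cap\prod_j\overline{U_{c(j)}}$, whose right-hand side is an intersection of two sets closed in $P$ and whose left-hand side is open in $P$; thus $D_c$ is clopen in $P$. I would then invoke the product form of cohesion: since each $U_{c(j)}$ witnesses the $\{A_s\}$-cohesion of $X$, the box $W_c:=\prod_j U_{c(j)}$ contains no nonempty proper clopen subset of $P$. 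This is precisely the statement that a product of witnesses is a witness, the mechanism behind the remark that $X^n$ is $\{A_{s_1}\times\cdots\times A_{s_n}\}$-cohesive, and it follows by an easy induction that slices off one coordinate at a time. As $D_c$ is clopen in $P$ and $D_c\subseteq E_c\subseteq W_c$, we obtain $D_c\in\{\emptyset,P\}$; and $D_c=P$ is impossible, since it would give $D=P$, i.e.\ $\mathcal C=B$. Hence every $D_c$ is empty, so $D=\emptyset$, contradicting $\mathcal C\neq\emptyset$. This shows $\mathcal U$ is the required witnessing neighborhood and proves the proposition.

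I expect the main obstacle to be exactly the clopenness of the pieces $D_c$ in $P$: the sets $E_c$ are only relatively clopen inside the open set $g^{-1}[\mathcal U]$, not inside $P$, so without arranging the $U_i$ to have pairwise disjoint closures one cannot separate the boxes and the decomposition $D=\bigsqcup_c D_c$ fails to be a decomposition into clopen pieces. Once that separation is secured, the preimage computation and the reduction to the single-box cohesion statement are routine. A secondary point to state carefully is the product-of-witnesses fact itself, which I would either cite from the remark or establish in one line by induction on $n$.
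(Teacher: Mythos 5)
Your proof is correct, and at bottom it uses the same mechanism as the paper: pull a hypothetical clopen subset of $f[A_{s_1}\times \ldots \times A_{s_n}]$ back to $A_{s_1}\times \ldots \times A_{s_n}$ through the continuous surjection $g$, intersect with boxes of small neighborhoods, and invoke the fact that a product of cohesion witnesses contains no nonempty proper clopen subset of $A_{s_1}\times \ldots \times A_{s_n}$ (this is the ``particularly'' consequence of Remark 5.2 of Dijkstra--van Mill that the paper records just before the proposition; your slicing induction is the standard proof of it). The organization of the pullback, however, is genuinely different. The paper argues by contradiction, fixes pairwise disjoint basic neighborhoods $V_1,\ldots,V_k$ of the points of $F$, and intersects $g^{\leftarrow}[\mathcal{V}]$ with the \emph{single} box $V_1\times\ldots\times V_n$ (repeating the last coordinate), claiming the result is a nonempty proper clopen subset of the product. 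You instead decompose the whole preimage $g^{-1}[\mathcal{U}]$ as the disjoint union of the boxes $\prod_{j}U_{c(j)}$ over all surjections $c\colon\{1,\ldots,n\}\to\{1,\ldots,k\}$ and use pairwise disjoint closures to make each piece $D_c$ clopen in $P$. This buys you two things. First, nonemptiness of the relevant clopen piece is automatic: $D\neq\emptyset$ forces some $D_c\neq\emptyset$, whereas the paper's assertion that some preimage tuple of a given $E\in\mathcal{V}$ lies in the one chosen box is delicate when the $A_{s_j}$ are distinct, since the arrangements of $E$ into an $n$-tuple belonging to $A_{s_1}\times\ldots\times A_{s_n}$ need not respect the fixed ordering $V_1,\ldots,V_n$. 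Second, you make explicit that the $U_i$ must themselves be cohesion witnesses of $X$, which is what legitimizes the final appeal to the product fact. The two points you flag --- the disjoint-closure trick for clopenness of the $D_c$ and the product-of-witnesses lemma --- are exactly the right ones to isolate, and both are handled correctly.
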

\begin{proof}
Suppose that $\mathcal{F}_n(X)$ is not $\{f[A_{s_1}\times \ldots \times A_{s_{n}}]: s_1,\ldots, s_n\in S\}$-cohesive. Then there exist $F\in \mathcal{F}_n(X)$ and a local base $\beta$ of $F$, such that any $\mathcal{U}\in \beta$ 
contains a non-empty proper clopen
subset of some element of $\{f[A_{s_1}\times \ldots \times A_{s_{n}}]: s_1,\ldots, s_n\in S\}$.
Let us suppose that $F=\{x_1, \ldots ,x_k\}$ with $x_j\neq x_i$ for each $i,j\in \{1,\ldots k\}$. Let $\beta_{x_1}, \ldots, \beta_{x_k}$ be local bases of the points $x_1, \ldots, x_k$ respectively such that if $i\neq j$, $U_j\in \beta_{x_j}$ and $U_i\in \beta_{x_i}$, then $U_j\cap U_i=\emptyset$. 
Let $x=(x_1, \ldots,x_{k}, x_{k+1},\ldots, x_n)$ be in $X^n$, where $x_k=x_{k+1}=\ldots =x_n$.
Note that $\beta_0=\{U_1 \times\ldots\times U_{k+1}\ldots \times U_n: U_j\in \beta_{x_j}  \textit{ } j\leq k \textit{ y } U_k=U_{k+1}=\ldots= U_n\}$ is a local base at $x$. Let $V_j\in \beta_{x_j}$ be fixed for each $j\in\{i,\ldots k\}$.
Note that $F\in \mathcal{N}=\langle V_1, \ldots, V_k\rangle$ and that $x\in V_1\times \ldots \times V_n $. By our assumption there are $s_{1},\ldots, s_{n}\in S$ an open subset $\mathcal{U}\in \beta$, and a non-empty proper clopen subset $\mathcal{V}$ of $f[A_{s_{1}}\times \ldots \times A_{s_{n}}]$ such that $\mathcal{V}\subset \mathcal{U} \subset \mathcal{N}$.
 
Let $g=f\downharpoonleft A_{s_{1}}\times \ldots \times A_{s_{{n}}}$. As $g$ is continuous, we have that $g^{\leftarrow}[\mathcal{V}]$ 
is a clopen subset of $A_{s_{1}}\times \ldots \times A_{s_{n}}$. Let $C=g^{\leftarrow}[\mathcal{V}]\cap (V_1\times \ldots \times V_n)$. We are going to show that $C$ is a nonempty proper clopen subset of $A_{s_1}\times \ldots \times A_{s_{n}}$. 

Let $ E \in  \mathcal{F}_n(X)$, if $ E\in \mathcal{V}$, then there exist $w \in g^\leftarrow[E] $ such that $w \in (V_1\times \ldots \times V_n)$. If $E \in \mathcal{N}\setminus \mathcal{V}$, then
$g^\leftarrow[E]  \cap  g^{\leftarrow}[\mathcal{V}]= \emptyset$. Then there exist $w \in g^\leftarrow [E]$ such that $w \in A_{s_1}\times \ldots \times  A_{s_n} \setminus C$. It is clear that $C$ is a open subset of $A_{s_{1}}\times \ldots \times A_{s_{n}}$. 
To see that is closed let us consider a sequence $\{(y_1^m,\ldots, y_n^m): m\in \N\}$ of points of $ C $ such that $(y_1^m,\ldots, y_n^m)\to (y_1,\ldots, y_n)$. Since $f$ is continuous, the sequence $\{f((y_1^m,\ldots, y_n^m))=g((y_1^m,\ldots, y_n^m)):m\in \omega\}$ converges to $f(y_1,\ldots, y_n)$. Note that for every $m\in \omega$, we have that $f((y_1^m,\ldots, y_n^m))\in \mathcal{V}$. Since $\mathcal{V}$ is closed in $f[A_{s_{1}}\times \ldots \times A_{s_{n}}]$, we have that $f(y_1\ldots ,y_n)\in \mathcal{V}$.
Hence $(y_1,\ldots, y_n)\in f^{\leftarrow}[\mathcal{V}]$.
 On the other hand, as $\mathcal{V}\subset N$, then $y_j\in V_j$ as $y_j^m\in V_j$ for $m\in \N$. Thus $C$ is clopen in $A_{s_{1}}\times \ldots \times A_{s_{n}}$. This is a contradiction, so $X^n$ is $\{A_{s_1}\times\ldots\times A_{s_n}: s_{1}, \ldots ,s_{n}\in S\}$-cohesive.

\end{proof}

The next concept is a property that implies the cohesion of a space $ X $ and also relates the properties of cohesion and almost zero-dimensionality.

\begin{defi}
A one-point connectification of a space $X$ is a connected extension $Y$ of the
space such that the remainder $Y \setminus X$ is a singleton.

\end{defi}

The following result gives necessary and sufficient conditions for a metric separable space $X$ to have a metric and separable one-point connectification.
\begin{teo} (Knaster \cite{ub})\label{KKK}
Let $X$ be a separable metric space. Then $X$ has a one-point connectification $Y$ which is metrizable and separable if only if $X$ is embeddable 
in a separable metric connected space $Z$ 
as an proper open subset of $Z$.
\end{teo}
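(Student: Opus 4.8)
The plan is to prove the two implications separately; the forward direction is essentially trivial and all of the work is in the converse.

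For the ($\Rightarrow$) implication, suppose $X$ admits a metrizable separable one-point connectification $Y = X \cup \{p\}$. I would simply take $Z = Y$. Being metrizable, $Y$ is $T_1$, so $\{p\} = Y \setminus X$ is closed and hence $X$ is open in $Y$; it is proper since $p \notin X$; and $Y$ is connected, metrizable and separable by hypothesis. Thus the inclusion realizes $X$ as a proper open subset of a connected separable metric space, as required.

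For the ($\Leftarrow$) implication, I would identify $X$ with a proper open subset of a connected separable metric space $Z$, fix a bounded metric $\rho \le 1$ on $Z$, and put $A = Z \setminus X$, a non-empty closed set. The naive idea of collapsing $A$ to a point fails: the quotient map $Z \to Z/A$ is closed, so by the Hanai--Morita--Stone theorem $Z/A$ is metrizable only when the boundary $\partial X$ is compact, which need not hold. Instead I would metrize $Y = X \cup \{p\}$ directly by
$$D(x,y) = \min\{\rho(x,y),\ \rho(x,A) + \rho(y,A)\},\qquad D(x,p) = \rho(x,A),$$
for $x,y \in X$, together with $D(p,p) = 0$. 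The routine steps are: (i) check that $D$ is a metric, where positive-definiteness uses $A = \overline{A}$ and the triangle inequality is a short case analysis; (ii) verify that $D$ induces the original subspace topology on $X$, since for a point $x$ with $\rho(x,A) = r > 0$ one has $D(x,y) = \rho(x,y)$ whenever $\rho(x,y) < r$; and (iii) note that the $D$-balls about $p$ are exactly $\{x \in X : \rho(x,A) < \varepsilon\} \cup \{p\}$, so that $X$ is open in $Y$, the remainder $Y \setminus X = \{p\}$ is a singleton, and $Y$ is separable because a countable dense subset of $X$ stays dense in $Y$.

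The heart of the argument, and the step I expect to be the main obstacle, is connectedness of $Y$. Suppose $Y = U \sqcup W$ were a separation into non-empty open sets with $p \in W$; then $U \subseteq X$ is open in $Y$, hence open in $Z$. I would argue that $U$ is also closed in $Z$: it is closed in $Y$, so $p \notin \overline{U}^{Y}$; if some $a \in A$ belonged to $\overline{U}^{Z}$, it would be the $\rho$-limit of points $u_n \in U$, giving $\rho(u_n,A) \to 0$ and hence $u_n \to p$ in $D$, so that $p \in \overline{U}^{Y}$, a contradiction; thus $\overline{U}^{Z} \cap A = \emptyset$, while $\overline{U}^{Z} \cap X = U$ because $U$ is closed in $X$. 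Therefore $U$ is a non-empty proper clopen subset of $Z$, contradicting the connectedness of $Z$. Consequently $Y$ is a connected, metrizable and separable one-point connectification of $X$, completing the converse. The genuine difficulty is concentrated here: the obvious quotient need not be metrizable, so the content lies in choosing the explicit gluing metric $D$ and then transferring connectedness from $Z$ to $Y$ through the clopen-set argument above.
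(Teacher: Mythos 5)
The paper does not prove this statement at all: it is quoted as a known result of Knaster with a citation to his 1945 paper, so there is no internal proof to compare against. Your blind argument is, as far as I can check, a correct and self-contained proof. The forward direction is exactly the trivial observation one expects ($\{p\}$ closed by $T_1$, hence $X$ open and proper in the connected space $Y$). In the converse, your gluing metric $D(x,y)=\min\{\rho(x,y),\rho(x,A)+\rho(y,A)\}$, $D(x,p)=\rho(x,A)$ is the standard device for adjoining an ideal point whose neighborhoods are the sets $\{x\in X:\rho(x,A)<\varepsilon\}\cup\{p\}$, and it correctly sidesteps the non-metrizability of the naive quotient $Z/A$; the triangle inequality, the agreement of the $D$-topology with the subspace topology on $X$, and the description of the balls about $p$ all check out. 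The clopen-transfer argument for connectedness is also sound: for a putative separation $Y=U\sqcup W$ with $p\in W$, your verification that $\overline{U}^{Z}\cap A=\emptyset$ (via $\rho(u_n,A)\to 0$ forcing $u_n\to p$) together with $\overline{U}^{Z}\cap X=U$ shows $U$ is a nonempty proper clopen subset of $Z$. Two small points you leave implicit but which do hold: density of $X$ in $Y$ (needed if ``extension'' is read as dense extension, and needed for your separability remark that a countable dense subset of $X$ remains dense) follows because $\overline{X}^{Z}$ must meet $A$ --- otherwise $X$ would be clopen in the connected space $Z$ --- so $\inf_{x\in X}\rho(x,A)=0$ and $p$ is not isolated. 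With those sentences added, the argument is complete.
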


Recall that if $U$ is an proper open subset of $X$, then $\mathcal{K}(U)$, $\mathcal{F}_n(U)$ and $\mathcal{F}(U)$ are proper 
open subsets to $\mathcal{K}(X)$, $\mathcal{F}_n(X)$ and $\mathcal{F}(X)$ 
respectively. With this fact in mind, we can prove the following result.

\begin{prop} \label{a5}

If $X$ has a metrizable and separable one-point connectification, then $\mathcal{K}(X)$, $\mathcal{F}_n(X)$ and $\mathcal{F}(X)$ have, each, metric and separable one-point connectification. 
%\mathcal{H}\in \{\mathcal{K}(X), \mathcal{F}_n(X)\}$.
\end{prop}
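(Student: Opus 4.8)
The plan is to reduce everything to Knaster's Theorem \ref{KKK}, using it once in each direction and supplying the connectedness of the relevant hyperspaces as the core input. First I would invoke the forward implication of Theorem \ref{KKK} on the hypothesis: since $X$ has a metrizable and separable one-point connectification, there is a connected, separable, metrizable space $Z$ in which $X$ sits as a proper open subset. The goal is then to realize $\mathcal{K}(X)$, $\mathcal{F}_n(X)$ and $\mathcal{F}(X)$ as proper open subsets of connected, separable, metrizable spaces, so that the reverse implication of Theorem \ref{KKK} delivers the desired one-point connectifications. The natural ambient spaces are $\mathcal{K}(Z)$, $\mathcal{F}_n(Z)$ and $\mathcal{F}(Z)$, which are separable and metrizable because $Z$ is (as noted after Theorem \ref{EE1}). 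By the remark preceding this proposition, applied to the proper open subset $X$ of $Z$, the spaces $\mathcal{K}(X)$, $\mathcal{F}_n(X)$ and $\mathcal{F}(X)$ are proper open subsets of $\mathcal{K}(Z)$, $\mathcal{F}_n(Z)$ and $\mathcal{F}(Z)$ respectively; properness is witnessed by any point $p\in Z\setminus X$, since then $\{p\}$ lies in the larger hyperspace but not in the smaller one.

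The heart of the argument, and the step I expect to be the main obstacle, is verifying that $\mathcal{K}(Z)$, $\mathcal{F}_n(Z)$ and $\mathcal{F}(Z)$ are connected whenever $Z$ is connected. For $\mathcal{F}_n(Z)$ I would use the continuous surjection $f\colon Z^n\to \mathcal{F}_n(Z)$ given by $f(z_1,\dots,z_n)=\{z_1,\dots,z_n\}$; since $Z$ is connected so is $Z^n$, and hence its continuous image $\mathcal{F}_n(Z)$ is connected. For $\mathcal{F}(Z)=\bigcup_{n\in\N}\mathcal{F}_n(Z)$, every $\mathcal{F}_n(Z)$ contains a fixed singleton $\{z_0\}$, so $\mathcal{F}(Z)$ is a union of connected sets with a common point and is therefore connected. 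For $\mathcal{K}(Z)$ I would show that $\mathcal{F}(Z)$ is dense in it: given $K\in\mathcal{K}(Z)$ and a basic neighborhood $\langle U_1,\dots,U_m\rangle$ of $K$, choosing a point $y_i\in K\cap U_i$ for each $i$ yields a finite set $\{y_1,\dots,y_m\}\in\langle U_1,\dots,U_m\rangle$; thus $\mathcal{K}(Z)$ is the closure of the connected set $\mathcal{F}(Z)$ and so is connected.

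Finally, with $\mathcal{K}(X)$, $\mathcal{F}_n(X)$ and $\mathcal{F}(X)$ exhibited as proper open subsets of the connected, separable, metrizable spaces $\mathcal{K}(Z)$, $\mathcal{F}_n(Z)$ and $\mathcal{F}(Z)$, and being themselves separable and metrizable, I would apply the reverse implication of Theorem \ref{KKK} to each of them to conclude that each admits a metrizable and separable one-point connectification.
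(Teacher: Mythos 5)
Your proposal is correct and follows essentially the same route as the paper: exhibit $\mathcal{K}(X)$, $\mathcal{F}_n(X)$ and $\mathcal{F}(X)$ as proper open subsets of the corresponding connected, separable, metrizable hyperspaces of a connected ambient space, then apply Knaster's theorem (Theorem \ref{KKK}). The only differences are cosmetic: the paper takes the ambient space to be the one-point connectification $Y$ itself (so the forward implication of Theorem \ref{KKK} is not needed) and cites Michael for the connectedness of $\mathcal{K}(Y)$, $\mathcal{F}_n(Y)$ and $\mathcal{F}(Y)$, whereas you prove that connectedness by hand.
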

\begin{proof}
If $Y$ is a metric and separable one-point connectification of $X$, then $\mathcal{K}(Y)$, $\mathcal{F}_n(Y)$ and $\mathcal{F}(Y)$ are metric and separable connected spaces (see \cite[Theorem 4.10]{Sub}). As $X$ is an proper open subset of $Y$, then $\mathcal{K}(X)$, $\mathcal{F}_n(X)$ and $\mathcal{F}(X)$ are 
proper open subsets to $\mathcal{K}(Y)$, $\mathcal{F}_n(Y)$ and $\mathcal{F}(Y)$ respectively. 
By Theorem \ref{KKK} we have that $\mathcal{K}(X)$, $\mathcal{F}_n(X)$ and $\mathcal{F}(X)$ have, each, a metric and separable one-point connectifications.

\end{proof}

It is known that if a space admits a one-point connectification, then it is cohesive. Moreover if an almost zero-dimensional space is cohesive, then it admits a one point connectification (see \cite[Proposition 5.4, p. 22]{jj})

\begin{cor} \label{a6}
If $X$ is a cohesive $AZD$ space, then  $\mathcal{F}_n(X), \mathcal{F}(X), \mathcal{K}(X)$ are cohesive $AZD$ spaces.
\end{cor}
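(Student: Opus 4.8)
The plan is to assemble the statement from the results already established in this section, handling almost zero-dimensionality and cohesion separately and then combining them. Throughout I will use that, since every space here is metrizable and separable, the one-point connectifications produced below are automatically metrizable and separable.

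For the almost zero-dimensionality I would argue directly from Proposition \ref{HZD}. Because $X$ is an $AZD$ space, the implication $(1)\Rightarrow(2)$ immediately gives that $\mathcal{K}(X)$ is $AZD$. Moreover, for every $n\in\N$ we have the inclusions
\[
\mathcal{F}_1(X)\subset \mathcal{F}_n(X)\subset \mathcal{F}(X)\subset \mathcal{K}(X),
\]
so $\mathcal{F}_n(X)$ and $\mathcal{F}(X)$ are among the subspaces $\mathcal{A}$ with $\mathcal{F}_1(X)\subset\mathcal{A}\subset\mathcal{K}(X)$. Hence the equivalence $(1)\Leftrightarrow(3)$ of Proposition \ref{HZD} shows that both of them are $AZD$ as well.

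For the cohesion I would route through one-point connectifications, which is the only point where some care is needed. Since $X$ is a cohesive $AZD$ space, the cited result \cite[Proposition 5.4]{jj} applies (this is exactly where the $AZD$ hypothesis on $X$ is used) and yields a metrizable and separable one-point connectification of $X$. Feeding this into Proposition \ref{a5} gives that $\mathcal{K}(X)$, $\mathcal{F}_n(X)$ and $\mathcal{F}(X)$ each possess a metrizable and separable one-point connectification. Invoking now the other implication recorded just before the corollary---that a space admitting a one-point connectification is cohesive---I conclude that all three hyperspaces are cohesive.

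Combining the two parts, each of $\mathcal{F}_n(X)$, $\mathcal{F}(X)$ and $\mathcal{K}(X)$ is simultaneously cohesive and $AZD$, as claimed. The step most worth flagging is the asymmetric use of the $AZD$ hypothesis: it is needed on $X$ only to manufacture the one-point connectification via \cite{jj}, whereas the converse passage from a one-point connectification back to cohesion holds for the hyperspaces without any appeal to almost zero-dimensionality, so the argument is not circular.
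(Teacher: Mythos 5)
Your proposal is correct and follows essentially the same route as the paper: almost zero-dimensionality of the hyperspaces via Proposition \ref{HZD}, and cohesion by passing from the cohesive $AZD$ space $X$ to a one-point connectification, applying Proposition \ref{a5}, and then using that a space with a one-point connectification is cohesive. Your explicit remark on where the $AZD$ hypothesis is actually needed is a nice clarification, but the argument itself matches the paper's.
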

\begin{proof}
As $X$ is cohesive $AZD$ space, then $X$ has a one-point connectification. As $X$ is a cohesive $AZD$ space, it has a
one-point connectification. By Proposition \ref{a5} $\mathcal{F}_n(X), \mathcal{F}(X), \mathcal{K}(X)$ have a one-point connectification. Thus $\mathcal{F}_n(X), \mathcal{F}(X), \mathcal{K}(X)$ are cohesive. Furthermore, by the Proposition, \ref{HZD} $\mathcal{F}_n(X), \mathcal{F}(X), $ and $ \mathcal{K}(X)$ are $AZD$.
\end{proof}

\begin{cor}
$\mathcal{F}_n(\mathfrak{E}_{\mathrm{c}})$, $\mathcal{F}_n(\mathfrak{E})$,
$\mathcal{F}(\mathfrak{E}_{\mathrm{c}})$, $\mathcal{F}(\mathfrak{E})$,
 $\mathcal{K}(\mathfrak{E}_{\mathrm{c}})$ and $\mathcal{K}(\mathfrak{E})$ 
are cohesive $AZD$ spaces.
 \end{cor}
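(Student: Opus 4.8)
The plan is to reduce the entire statement to Corollary \ref{a6}, which already asserts that $\mathcal{F}_n(X)$, $\mathcal{F}(X)$ and $\mathcal{K}(X)$ inherit both cohesion and almost zero-dimensionality from any cohesive $AZD$ space $X$. Consequently it suffices to verify that the two ground spaces $\mathfrak{E}$ and $\mathfrak{E}_{\mathrm{c}}$ are themselves cohesive $AZD$ spaces, and then to invoke Corollary \ref{a6} twice: once with $X=\mathfrak{E}_{\mathrm{c}}$ and once with $X=\mathfrak{E}$. Thus almost all of the work has already been front-loaded into the preceding results, and what remains is only to check the two hypotheses of Corollary \ref{a6} for the base spaces.

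First I would record that both $\mathfrak{E}$ and $\mathfrak{E}_{\mathrm{c}}$ are almost zero-dimensional. This is exactly the property emphasized at the opening of Section 2: at each point of $\mathfrak{E}$ (respectively $\mathfrak{E}_{\mathrm{c}}$) there is a local base each of whose members is an intersection of clopen sets, which is the defining $AZD$ condition and which fits the criterion of Proposition \ref{eazd}. Next I would establish cohesion. For $\mathfrak{E}$ this is Erd\H{o}s's original observation recalled in the introduction, that every point of $\mathfrak{E}$ has a neighborhood containing no nonempty (proper) clopen subset of $\mathfrak{E}$; this is precisely $\{\mathfrak{E}\}$-cohesion, that is, cohesion in the sense of the definition above. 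For $\mathfrak{E}_{\mathrm{c}}$ the analogous conclusion is known from the work of Dijkstra and van Mill in \cite{jj}, so that $\mathfrak{E}_{\mathrm{c}}$ is cohesive as well.

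With both ingredients in hand the proof concludes by a direct application of Corollary \ref{a6} to each of $\mathfrak{E}_{\mathrm{c}}$ and $\mathfrak{E}$, yielding that $\mathcal{F}_n(\mathfrak{E}_{\mathrm{c}})$, $\mathcal{F}(\mathfrak{E}_{\mathrm{c}})$, $\mathcal{K}(\mathfrak{E}_{\mathrm{c}})$ and the corresponding spaces built over $\mathfrak{E}$ are all cohesive $AZD$ spaces. I do not anticipate a genuine obstacle here, since the substantive content lives in Proposition \ref{HZD} (for propagation of almost zero-dimensionality to hyperspaces) and in Proposition \ref{a5} together with Knaster's Theorem \ref{KKK} (for propagation of the one-point connectification, hence of cohesion). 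The only point requiring a moment's care is to ensure that each base space is \emph{simultaneously} cohesive and $AZD$, because Corollary \ref{a6} consumes both hypotheses at once; once the two cited facts are assembled for $\mathfrak{E}$ and $\mathfrak{E}_{\mathrm{c}}$ the result is immediate.
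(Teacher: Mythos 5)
Your proposal is correct and follows exactly the paper's own argument: the paper likewise derives the corollary by applying Corollary \ref{a6} to the base spaces, using the known fact that $\mathfrak{E}$ and $\mathfrak{E}_{\mathrm{c}}$ are cohesive $AZD$ spaces. The extra detail you supply on why the two base spaces are cohesive and $AZD$ is consistent with the facts recalled in the introduction and Section 2.
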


\begin{proof}

This result follows from Corollary \ref{a6} and from the fact that $\mathfrak{E}_{\mathrm{c}}$ and $\mathfrak{E}$ are cohesive $AZD$ spaces.
\end{proof}

To prove that $\mathcal{K}(X)$ is cohesive in the Corollary \ref{a6}, it was assumed that $X$ is an $AZD$ space. Then, the following natural question.

\begin{pre}
Is there a $ X $ space cohesive such that $ \mathcal{K}(X) $ is not cohesive?
\end{pre}

\section{Proof of Theorem 1.1 }

\begin{defi}
Let $X$ be a set and let $ \tau_1$ and $ \tau_2$ 
two topologies in $X$, if $(X,\tau_1, \tau_2)$ 
satisfies that:
\begin{enumerate}
\item  $\tau_1 \subset \tau_2$ and $\tau_1$ 
is a zero dimensional topology such that every point in $ X$ has a neighborhood in $\tau_2$ which is compact with respect to $\tau_1$, we say that $(X,\tau_1, \tau_2)$ 
has property $C_1$.
\item  $\tau_1 \subset \tau_2$ and $\tau_1$ 
is a zero dimensional topology such that every point in $ X$ has a neighborhood in $\tau_2$ 
which is complete with respect to $\tau_1$, we say that $(X,\tau_1, \tau_2)$ 
has property $C_2$.
\end{enumerate}
\end{defi}

Note that the property $ C_1 $ is inherited to closed subsets with respect to $ \tau_1 $ and the property $ C_2 $ is inherited to the $ G_\delta $ subsets with respect to $ \tau_1 $. 

To prove Theorem 1.1 we will use the following characterization of $\mathfrak{E}_{\mathrm{c}}$.
\begin{teo}[{\cite[Theorem 3.1, items (1) and (2) ]{j}}]\label{EC} Let $ (\mathcal{E} ,\tau) $ be a topological space. 
The following statements are equivalent.
\begin{enumerate}
\item $\mathcal{E}$ 
is homeomorphic to $\mathfrak{E}_{\mathrm{c}}$.
\item  $ \mathcal{E} $ is cohesive and there exist a topology $\mathcal{W}$ in $ \mathcal{E}$ such that $(\mathcal{E}, \tau ,\mathcal{W})$ has property $C_1$.

\end{enumerate}
  
\end{teo}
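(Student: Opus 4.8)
The plan is to prove the two implications separately, treating (1)$\Rightarrow$(2) as a verification on the concrete model and reserving the genuine difficulty for (2)$\Rightarrow$(1). For (1)$\Rightarrow$(2), note that ``$\mathcal{E}$ is cohesive'' and ``there is a topology $\mathcal{W}$ making property $C_1$ hold'' are both invariants of the homeomorphism type of $(\mathcal{E},\tau)$: a homeomorphism transports cohesion directly and transports a witness topology by pushing it forward. Hence it suffices to exhibit both for $\mathfrak{E}_{\mathrm{c}}$ itself, where cohesion is classical. For the witness I would let $\mathcal{W}$ be the topology that $\mathfrak{E}_{\mathrm{c}}$ inherits from the product $C^{\omega}$, with $C=\{0\}\cup\{1/n:n\in\N\}$ a convergent sequence; then $C^{\omega}$ is compact, metrizable and zero-dimensional, so $\mathcal{W}$ is coarser than the $\ell^2$-topology $\tau$ and is zero-dimensional. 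The decisive point is that each closed $\ell^2$-ball $\{x:\|x-p\|\le r\}$ is a $\tau$-neighborhood of $p$ that is $\mathcal{W}$-compact, because $x\mapsto\sum_{n}(x_n-p_n)^2$ is a supremum of $\mathcal{W}$-continuous partial sums, hence $\mathcal{W}$-lower semicontinuous, so its sublevel sets are $\mathcal{W}$-closed in the compact space $C^{\omega}$. This yields property $C_1$, and both properties transfer across the homeomorphism.

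For (2)$\Rightarrow$(1) I would first distill the hypotheses into the assertion that $(\mathcal{E},\tau)$ is a nonempty, cohesive, almost zero-dimensional, Polish space. Almost zero-dimensionality is the routine half: the $\mathcal{W}$-compact $\tau$-neighborhoods granted by property $C_1$ are $\mathcal{W}$-closed, so every point has a local base of $\mathcal{W}$-closed sets and $\mathcal{W}$ witnesses the almost zero-dimensionality of $\mathcal{E}$ through Proposition \ref{eazd}. The essential extra input is that property $C_1$ requires $\mathcal{W}$-\emph{compactness} rather than mere $\mathcal{W}$-completeness: this is exactly the topological completeness that makes $(\mathcal{E},\tau)$ Polish and that singles out the \emph{complete} Erd\H{o}s space as the target. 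Cohesion is carried over verbatim from the hypothesis.

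It then remains to prove the uniqueness statement that every nonempty, cohesive, almost zero-dimensional, Polish space is homeomorphic to $\mathfrak{E}_{\mathrm{c}}$, and this is the heart of the matter. Here I would follow the Kawamura--Oversteegen--Tymchatyn route through the Lelek fan: represent $\mathcal{E}$ as the endpoint-type set attached to the graph of a lower semicontinuous Lelek function $\varphi\colon D\to[0,\infty]$, taking the zero-dimensional Polish domain $D$ from the witness topology $\mathcal{W}$ and the vertical coordinate from the refinement $\tau$, and then show that cohesion forces $\varphi$ to satisfy the Lelek condition, i.e.\ to be nowhere locally trivial with a suitably dense graph. This pins the representing function down to the canonical model and produces a homeomorphism onto $\mathfrak{E}_{\mathrm{c}}$. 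I expect this last construction of the homeomorphism --- an inverse-limit/tree-matching argument driven by cohesion and by the completeness coming from property $C_1$ --- to be the main obstacle, far more so than the extraction of almost zero-dimensionality, Polishness and cohesion in the preceding step.
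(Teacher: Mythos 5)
You should first be aware that the paper does not prove Theorem \ref{EC} at all: it is imported verbatim from Dijkstra and van Mill \cite{j} (their Theorem 3.1), so there is no internal argument to compare against and your proposal has to stand on its own. Your direction (1)$\Rightarrow$(2) does: transporting cohesion and a witness topology along a homeomorphism is fine, the product topology inherited from $C^{\omega}$ with $C=\{0\}\cup\{1/n:n\in\N\}$ is indeed a coarser zero-dimensional topology, and the lower-semicontinuity argument showing that closed $\ell^{2}$-balls are $\mathcal{W}$-compact is exactly the standard verification.

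The genuine gap is in (2)$\Rightarrow$(1), at the step where you ``distill the hypotheses'' into \emph{nonempty, cohesive, almost zero-dimensional, Polish} and then set out to prove that every such space is homeomorphic to $\mathfrak{E}_{\mathrm{c}}$. That uniqueness statement is false. The space $\mathfrak{E}_{\mathrm{c}}^{\omega}$ is nonempty, Polish, cohesive (a product with a cohesive factor is cohesive) and almost zero-dimensional (take the countable product of witness topologies), yet it is not homeomorphic to $\mathfrak{E}_{\mathrm{c}}$ by Dijkstra--van Mill--Steprans \cite{J3}; the present paper's closing question, which treats $\mathfrak{E}_{\mathrm{c}}$ and $\mathfrak{E}_{\mathrm{c}}^{\omega}$ as distinct candidate models for $\mathcal{K}(\mathfrak{E}_{\mathrm{c}})$, presupposes exactly this. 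The point your reduction discards is that property $C_{1}$ demands $\mathcal{W}$-\emph{compact} neighborhoods, which is strictly stronger than the Polishness you retain: $\mathfrak{E}_{\mathrm{c}}^{\omega}$ admits a witness topology with completely metrizable neighborhoods (the $C_{2}$-type condition) but no witness topology with compact ones. Consequently the Lelek-fan route you name cannot be run from ``cohesive $+$ AZD $+$ Polish'' alone; the $\mathcal{W}$-compactness must be carried into the construction, since it is what makes the associated semicontinuous function have compact graph pieces and places $\mathcal{E}$ in the endpoint set of the compact Lelek fan, i.e.\ in the Kawamura--Oversteegen--Tymchatyn model of $\mathfrak{E}_{\mathrm{c}}$. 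As written, the hard implication proves (at best) a false intermediate claim.
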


For a topological space $(X, \tau)$,
the symbol $\tau_{\mathcal{F}_n(X)}$ denotes the Vietoris topology in $\mathcal{F}_n(X)$.

\begin{obs}\label{masg}
Let $X$ be a set and let $\tau_1$ and $\tau_2$  be
two topologies in $ X $ such that $\tau_1 \subset \tau_2$, then we have to $\tau_{1{\mathcal{F}_n(X)}}\subset \tau_{2{\mathcal{F}_n(X)}}$ 
\end{obs}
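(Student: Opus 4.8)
The plan is to compare the two Vietoris topologies directly at the level of their canonical bases. Recall from the introduction that the Vietoris topology on $\mathcal{K}(X)$, and hence its restriction to the subspace $\mathcal{F}_n(X)$, has as a base all sets of the form $\langle U_1,\ldots,U_m\rangle$ with each $U_k$ a non-empty open subset of $X$. The only datum that distinguishes $\tau_{1{\mathcal{F}_n(X)}}$ from $\tau_{2{\mathcal{F}_n(X)}}$ is which sets are admitted as the open building blocks $U_k$: the former uses $\tau_1$-open sets, the latter $\tau_2$-open sets. So the statement is a pure monotonicity property of the Vietoris construction.

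First I would fix an arbitrary basic open set $\langle U_1,\ldots,U_m\rangle\cap \mathcal{F}_n(X)$ of $\tau_{1{\mathcal{F}_n(X)}}$, so that each $U_k$ belongs to $\tau_1$. Since $\tau_1\subset \tau_2$, every such $U_k$ is also $\tau_2$-open, and therefore the very same set $\langle U_1,\ldots,U_m\rangle\cap \mathcal{F}_n(X)$ is a basic open set of $\tau_{2{\mathcal{F}_n(X)}}$. As every member of the canonical base of the coarser topology is in this way an open set of the finer one, the inclusion $\tau_{1{\mathcal{F}_n(X)}}\subset \tau_{2{\mathcal{F}_n(X)}}$ follows immediately.

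There is essentially no obstacle here; the argument uses nothing beyond the definition of the canonical base and the hypothesis $\tau_1\subset\tau_2$. The only point worth a line of care is to observe that the description of the base is unchanged when one passes from $\mathcal{K}(X)$ to the subspace $\mathcal{F}_n(X)$, so that intersecting the generators $\langle U_1,\ldots,U_m\rangle$ with $\mathcal{F}_n(X)$ yields the canonical base of the symmetric product in each of the two topologies, making the comparison legitimate.
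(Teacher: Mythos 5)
Your argument is correct and is essentially identical to the paper's own proof: both verify that each canonical basic open set $\langle U_1,\ldots,U_m\rangle$ of $\tau_{1{\mathcal{F}_n(X)}}$ is built from $\tau_1$-open sets, which by $\tau_1\subset\tau_2$ are also $\tau_2$-open, so the same set is basic open in $\tau_{2{\mathcal{F}_n(X)}}$. Your extra remark about restricting the base from $\mathcal{K}(X)$ to $\mathcal{F}_n(X)$ is a reasonable precision the paper leaves implicit.
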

\begin{proof}

Let $\langle U_1,\ldots, U_n\rangle \in \tau_{1{\mathcal{F}_n(X)}}$, then $U_j\in \tau_1$ for each $j\in \{1,\ldots n\}$. As $\tau_1\subset \tau_2$, then $U_j\in \tau_2$ for each $j\in \{1,\ldots n\}$. This implies that $\langle U_1,\ldots, U_n\rangle \in \tau_{2{\mathcal{F}_n(X)}}$. So $\tau_{1{\mathcal{F}_n(X)}} \subset \tau_{2{\mathcal{F}_n(X)}}$.
\end{proof}

With this remark it is not difficult to see that if $\tau$ witnesses the almost zero-dimensionality
of $X$, then $\tau_{\mathcal{F}_n(X)}$ 
witnesses the almost zero-dimensionality of $\mathcal{F}_n(X)$.

In the proof of the following result, we use Propositions 2.4.2 and 4.1.3 and Theorem 4.2 in \cite{Sub}.

\begin{prop} \label{Pc1}

Let $X$ be a set and let $ \tau_1$ and $ \tau_2$ be two topologies on $X$. If $(X,\tau_1, \tau_2)$ has the property $C_1$ or $C_2$, 
then for any $n\in \N$, we have that $(\mathcal{F}_n(X), \tau_{1\mathcal{F}_n(X)}, \tau_{2\mathcal{F}_n(X)})$ has the property $C_1$ or $C_2$, 
respectively.
\end{prop}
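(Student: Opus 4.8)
The plan is to check, for both properties at once, the three clauses defining $C_1$ (resp.\ $C_2$) for the triple $(\mathcal{F}_n(X),\tau_{1\mathcal{F}_n(X)},\tau_{2\mathcal{F}_n(X)})$. The inclusion $\tau_{1\mathcal{F}_n(X)}\subset\tau_{2\mathcal{F}_n(X)}$ is exactly Remark~\ref{masg}, and the zero-dimensionality of $\tau_{1\mathcal{F}_n(X)}$ follows from that of $\tau_1$ via \cite[Proposition 4.13.1]{Sub} (the same fact used in Proposition~\ref{HZD} and recorded in the comment after Remark~\ref{masg}). Hence everything reduces to producing, for an arbitrary $F=\{x_1,\ldots,x_k\}\in\mathcal{F}_n(X)$, a $\tau_{2\mathcal{F}_n(X)}$-neighborhood of $F$ that is $\tau_{1\mathcal{F}_n(X)}$-compact in the $C_1$ case, and $\tau_{1\mathcal{F}_n(X)}$-complete in the $C_2$ case.

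For the construction I would, for each $i\le k$, use the hypothesis to pick a $\tau_2$-neighborhood $W_i$ of $x_i$ that is $\tau_1$-compact (resp.\ $\tau_1$-complete), set $W=\bigcup_{i=1}^k W_i$, and take as candidate the set $\mathcal{N}=\langle W_1,\ldots,W_k\rangle\cap\mathcal{F}_n(X)$. Writing $V_i=\mathrm{int}_{\tau_2}(W_i)$, the set $\langle V_1,\ldots,V_k\rangle\cap\mathcal{F}_n(X)$ is $\tau_{2\mathcal{F}_n(X)}$-open, contains $F$, and sits inside $\mathcal{N}$, so $\mathcal{N}$ is a $\tau_{2\mathcal{F}_n(X)}$-neighborhood of $F$. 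Moreover every member of $\mathcal{N}$ is contained in $W$, so $\mathcal{N}\subset\mathcal{F}_n(W)$, and in fact $\mathcal{N}=\{H\in\mathcal{F}_n(W):H\cap W_i\neq\emptyset\text{ for all }i\le k\}$.

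In the $C_1$ case, $W$ is $\tau_1$-compact as a finite union of $\tau_1$-compact sets, so $\mathcal{F}_n(W)$ is $\tau_1$-compact by \cite[Proposition 4.1.3]{Sub}; since each $W_i$ is $\tau_1$-closed (being $\tau_1$-compact and $\tau_1$ Hausdorff), $\langle W_1,\ldots,W_k\rangle$ is $\tau_1$-closed by \cite[Lemma 2.3.2]{Sub}, and therefore $\mathcal{N}$ is a closed subset of a $\tau_1$-compact space, hence $\tau_{1\mathcal{F}_n(X)}$-compact. In the $C_2$ case, $W$ is $\tau_1$-complete because a finite union of $G_\delta$ subsets of the $\tau_1$-completion is again $G_\delta$, so $\mathcal{F}_n(W)$ is $\tau_1$-complete by \cite[Theorem 4.2]{Sub}; as a $G_\delta$ subset of a $\tau_1$-complete space is $\tau_1$-complete, it then suffices to show that $\mathcal{N}$ is a $\tau_{1\mathcal{F}_n(X)}$-$G_\delta$ subset of $\mathcal{F}_n(W)$. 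Writing each $W_i=\bigcap_m U_{i,m}$ with the $U_{i,m}$ decreasing and $\tau_1$-open in $W$, I would prove the key identity
\[
\{H\in\mathcal{F}_n(W):H\cap W_i\neq\emptyset\}=\bigcap_{m}\{H\in\mathcal{F}_n(W):H\cap U_{i,m}\neq\emptyset\},
\]
whose nontrivial inclusion uses that a finite $H$ meeting every $U_{i,m}$ must, by the pigeonhole principle and the nesting of the $U_{i,m}$, contain a single point lying in all of them. Each hitting set on the right is $\tau_{1\mathcal{F}_n(X)}$-open (\cite[Proposition 2.4.2]{Sub}), so the left-hand set is $G_\delta$, and $\mathcal{N}$, a finite intersection over $i\le k$ of such sets, is $G_\delta$ in $\mathcal{F}_n(W)$ and hence $\tau_{1\mathcal{F}_n(X)}$-complete.

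The routine part is the $C_1$ case, where $\tau_1$-compact neighborhoods are automatically $\tau_1$-closed and the standard ``closed in compact'' argument applies. The hard part will be the $C_2$ case: the predicate ``$H$ meets $W_i$'' is not open when $W_i$ is only $G_\delta$, and turning it into a countable intersection of Vietoris-open conditions genuinely relies on the bounded cardinality of the members of $\mathcal{F}_n(X)$ through the pigeonhole step above---this is precisely the place where one expects the argument for $\mathcal{F}_n$ not to transfer verbatim to $\mathcal{K}(X)$.
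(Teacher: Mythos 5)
Your proposal is correct and follows the same overall skeleton as the paper's proof: reduce to exhibiting, around each $F\in\mathcal{F}_n(X)$, a Vietoris neighborhood of the form $\langle W_1,\ldots,W_k\rangle$ built from pointwise $\tau_2$-neighborhoods, observe that it sits inside $\mathcal{F}_n(W)$ with $W=\bigcup_i W_i$, and invoke Michael's results to transfer compactness (resp.\ completeness) from $W$ to $\mathcal{F}_n(W)$. The one place you genuinely diverge is the completeness case. The paper treats $C_1$ and $C_2$ uniformly: a complete subspace of a metric space is closed (and $\tau_1$ is separable metrizable by the paper's standing assumption), so the $K_i$ are $\tau_1$-closed, $\langle K_1,\ldots,K_l\rangle$ is $\tau_1$-closed by \cite[Lemma 2.3.2]{Sub}, and a closed subset of a compact (resp.\ complete) space is compact (resp.\ complete) --- no $G_\delta$ analysis is needed. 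Your pigeonhole identity
$\{H\in\mathcal{F}_n(W):H\cap W_i\neq\emptyset\}=\bigcap_{m}\{H\in\mathcal{F}_n(W):H\cap U_{i,m}\neq\emptyset\}$
is a correct and somewhat interesting observation (it shows the hitting condition for a $G_\delta$ target is still $G_\delta$ on $\mathcal{F}_n$, which indeed uses finiteness of the members), but it is solving a problem that does not arise: since the $W_i$ are complete they are already closed, so the ``hard part'' you identify collapses to the same closed-subset argument as in the $C_1$ case. A minor further difference is that the paper constructs its compact/complete neighborhood inside an arbitrary prescribed basic open set $\langle U_1,\ldots,U_m\rangle$ (yielding a whole neighborhood base of such sets), while you produce a single neighborhood, which is all the definition of $C_1$/$C_2$ requires.
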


\begin{proof}
First of all, since $\tau_1$ is a zero-dimensional topology on $X$, then $\tau_{1\mathcal{F}_n(X)}$ is a zero dimensional topology on $\mathcal{F}_n(X)$. Now, let $F\in \mathcal{F}_n(X)$ and $\mathcal{U}\in \tau_{2{\mathcal{F}_n(X)}}$ such that $F\in \mathcal{U}$. 
Let us suppose that $\mathcal{U}= \langle U_1,\ldots, U_m\rangle$ and that $F=\{x_1,\ldots, x_l\}$. For each $x_k\in F$ there exist $V_k\in \tau_2$ 
and a compact (resp., complete) $K_k$ of $X$ with respect to $\tau_1$ such that $x_k\in V_k \subset K_k\subset \bigcap\{U_j:x_k\in U_j\}$. Then $F\in \langle V_{1},\ldots ,V_{l}\rangle\subset \langle K_{1},\ldots ,K_{l}\rangle \subset \mathcal{U}$.
Note that $Z=\bigcup_{j=1}^l K_{j}$ 
is compact (resp., complete) with respect to $\tau_1$. Then $\mathcal{F}_n(Z)$ is compact(r esp., complete) with respect to $\tau_{1{\mathcal{F}_n(X)}}$. Moreover $\langle K_{1},\ldots ,K_{l}\rangle \subset \mathcal{F}_n(Z)$. Given that $\langle K_{1},\ldots K_{m}\rangle$ 
is a closed subset with respect to $\tau_{1{\mathcal{F}_n(X)}}$, we have that $\langle K_{1},\ldots K_{m}\rangle$ is compact   (resp., complete) with respect that $\tau_{1{\mathcal{F}_n(X)}}$. All this proves that $(\mathcal{F}_n(X), \tau_{1\mathcal{F}_n(X)}, \tau_{2\mathcal{F}_n(X)})$ has the property $C_1$ (resp.,$C_2$).

\end{proof}

\textbf{
Proof of  Theorem 1.1}
\begin{proof}
Let $\mathcal{W}$ be a topology in $\mathfrak{E}_{\mathrm{c}}$ which satisfies the conditions in the item (2) of Theorem \ref{EC}. By Remark \ref{masg}, $\mathcal{W}_{\mathcal{F}_n(\mathfrak{E}_{\mathrm{c}})}$ is coaser than the Vietoris topology on $\mathcal{F}_n(\mathfrak{E}_{\mathrm{c}})$. By Proposition \ref{Pc1} and the Corollary \ref{a6}, $\mathfrak{E}_{\mathrm{c}} $ satisfies all conditions in item (2) of Theorem \ref{EC}. Thus, by Theorem \ref{EC} $\mathcal{F}_n(\mathfrak{E}_{\mathrm{c}})$ is homeomorphic to $\mathfrak{E}_{\mathrm{c}}$.

\end{proof}

\section{Proof of Theorem 1.2}

%%%%%%%%%%%%%%%%%%%%%%%%%%%%%Tehorem

To state the characterization of $\mathfrak{E}$ (see Theorem \ref{EE}) that we are going to use to prove Theorem 1.2 we will give some definitions.

\begin{defi} [{\cite[Definition 3.3, p. 8]{jj}}] If $A$ is a nonempty set then $A^{<\omega}$ denotes the set of all finite strings of elements of $A$, including the null
string $\emptyset$. If $s \in A^{<\omega}$ then $\vert s\vert$ denotes its length. In this context the set $A$ is called alphabet. Let $A^\omega$ denote the set of all
infinite strings of elements of $A$.
If $s \in A^{<\omega}$ and $t \in A^{<\omega} \cup A^\omega$,
then we put $s \prec t$ if $s$ is an initial substring of $t$; that is, there is an $r \in A^{<\omega} \cup A^\omega $ 
with $ s^\frown r = t$, where $\frown$ denotes concatenation of strings.
%wheredenotes concatenation of strings.
If $t\in A^{<\omega} \cup A^\omega$ and $k \in \omega$, 
$t\downharpoonleft k \in A^{<\omega}$ 
 is the element of $A^{<\omega} $ characterized by 
 $ t \downharpoonleft k \prec t$ and $\vert t\downharpoonleft k\vert =k$.
\end{defi}

\begin{defi}[{\cite[Definition 3.4, p. 9]{jj}}]\label{tree}A tree $T$ on $A$ is a subset of $ A^{<\omega}$ that is closed under initial segments, i.e., if $s \in T$ and $t \prec s$ then
$t \in T$. Elements of $T$ are called nodes. An infinite branch of $T$ is an element $r$ of $A^\omega$ such that
 $r\downharpoonleft k\in T$ for every $k \in \omega$. 
 The body of $T$, written as $[T]$, is the set of all infinite
  branches of $T$. If $s, t \in T$ are such that $s \prec t$
  and $\vert t\vert = \vert s\vert + 1$, 
 then we say that $t$ is an immediate successor of $s$ and $succ(s)$ denotes the set of immediate successors of $s$ in $T $. 
 \end{defi}

\begin{defi} 
 Let $n\geq 2$. If $S_1,\ldots, S_n$ are trees over $A_1,\ldots, A_n$, respectively, and if $s_1=a_1^1,\ldots, a_k^1\in S_1,\ldots, s_n=a_1^n,\ldots, a_k^n\in S_n$, are strings of equal length, then we difine the string $s_1*\ldots*s_n$ over $A_1\times \ldots \times A_n$ by $s_1*\ldots*s_n=(a_1^1,\ldots,a_1^n),\ldots,(a_k^1,\ldots,a_k^n)$. We define the product tree $S_1*\ldots *S_n$ over $A_1\times \ldots\times A_n$ as the partially ordered subset $\{s_1*\ldots*s_k:s_i\in S_i\textit{ for all i}\in \{1,\ldots, n\} \textit{ and } \vert s_1\vert \ldots= \vert s_n\vert \}$ of the alphabet $(A_1\times \ldots\times A_n)^{<\omega}$.
\end{defi}

\begin{obs}\label{tree1}
Let $n\geq 2$, and let $S_1,\ldots , S_n$ trees over $A_1,\ldots , A_n$, respectively. Then $S_1*\ldots * S_n$ is indeed a tree over $A_1\times \ldots \times A_n$. Moreover, the function $\phi: S_1*\ldots* S_n\to S_1\times \ldots \times S_n$ defined by $\phi(s_1*\ldots*s_n)=(s_1,\ldots, s_n)$, is an order isomorphism from $S_1*\ldots* S_n$ to $\phi [S_1*\times * S_n]$.... So, the following statements hold.
\begin{enumerate}
\item Let $s_1, t_1\in S_1, \ldots s_n, t_n\in S_n$ with $\vert s_1\vert= \ldots =\vert s_n\vert$ and $\vert t_1\vert= \ldots =\vert t_n\vert$. Then $s_1*\ldots *s_n\prec_{S_1*\ldots* S_n} t_1*\ldots* t_n$ if only if $s_i\prec_{S_i}t_i$ for all $i\in \{1,\ldots, n\}$.
\item Let $s_1, t_1\in S_1, \ldots s_n, t_n\in S_n$ with $\vert s_1\vert= \ldots =\vert s_n\vert$ and $\vert t_1\vert= \ldots =\vert t_n\vert$. Then $t_1*\ldots t_n\in succ(s_1*\ldots s_n)$ in $S_1*\ldots *S_n$ if only if $t_i\in succ(s_i)$ in $S_i$ for all $i\in \{1,\ldots, n\}$.

\item The body of $S_1*\ldots *S_n$, $[S_1*\ldots *S_n]$, is equal to the set $\{(\widehat{s_1},\ldots,\widehat{s_n} ):\widehat{s_k}\in [S_k] \textit{ for all } i\in \{1,\ldots, n\}\}$.
\item Let $\widehat{t_1}\in [S_1],\ldots, \widehat{t_n}\in [S_n]$, and let $k\in \omega$. Then 
$$(\widehat{t_1},\ldots,\widehat{t_n})\downharpoonleft k= (\widehat{t_1}\downharpoonleft k,\ldots,\widehat{t_n}\downharpoonleft k)=\widehat{t_1} \downharpoonleft k *\ldots*\widehat{t_n} \downharpoonleft k.$$
\end{enumerate}
\end{obs}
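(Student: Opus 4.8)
The plan is to reduce every assertion in the remark to a single structural fact: via coordinatewise projection, the operation $*$ sets up a length-preserving bijection between the nodes of $S_1*\ldots*S_n$ and the $n$-tuples of equal-length nodes $(s_1,\ldots,s_n)$ with $s_i\in S_i$, and this bijection commutes with truncation. First I would record the uniqueness of the decomposition: if $u\in S_1*\ldots*S_n$ and $u=s_1*\ldots*s_n$ with $\vert s_1\vert=\ldots=\vert s_n\vert=k$, then by definition the $j$-th entry of $u$ is the tuple $(a_j^1,\ldots,a_j^n)$ of the $j$-th entries of the factors, so each $s_i$ is recovered from $u$ by reading off the $i$-th coordinate of every entry. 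Hence the factors $s_1,\ldots,s_n$ are uniquely determined by $u$, which makes $\phi$ well defined and injective; surjectivity onto $\phi[S_1*\ldots*S_n]$ holds by definition. This uniqueness is the point on which every later step silently rests, so I would state it explicitly at the outset.

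Next I would check that $S_1*\ldots*S_n$ is a tree and, in the same breath, prove the finite form of item (4). If $u=s_1*\ldots*s_n$ has length $k$ and $v\prec u$ with $\vert v\vert=m\leq k$, then the first $m$ entries of $u$ are precisely the first $m$ entries of the factors assembled coordinatewise, i.e. $v=(s_1\downharpoonleft m)*\ldots*(s_n\downharpoonleft m)$. Since each $S_i$ is closed under initial segments, $s_i\downharpoonleft m\in S_i$, so $v\in S_1*\ldots*S_n$; thus the product is a tree. The identity $(s_1*\ldots*s_n)\downharpoonleft m=(s_1\downharpoonleft m)*\ldots*(s_n\downharpoonleft m)$ isolated here is the workhorse for the remaining items, and item (4) for infinite branches is read off from it together with the definitions of $\downharpoonleft$ and $*$.

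For item (1), assume $\vert s_1\vert=\ldots=\vert s_n\vert=k$ and $\vert t_1\vert=\ldots=\vert t_n\vert=l$. If $s_i\prec t_i$ for every $i$, then $k\leq l$ and $t_i\downharpoonleft k=s_i$, so truncating gives $(t_1*\ldots*t_n)\downharpoonleft k=(t_1\downharpoonleft k)*\ldots*(t_n\downharpoonleft k)=s_1*\ldots*s_n$, whence $s_1*\ldots*s_n\prec t_1*\ldots*t_n$. Conversely, if $s_1*\ldots*s_n\prec t_1*\ldots*t_n$, then projecting this prefix relation onto the $i$-th coordinate and using uniqueness of decomposition gives $s_i=t_i\downharpoonleft k\prec t_i$. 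This is exactly the assertion that $\phi$ is order-preserving, so $\phi$ is an order isomorphism. Item (2) then follows from item (1) by a length count: immediate succession in the product means $l=k+1$, and since all $t_i$ have length $l$ and all $s_i$ have length $k$, this is equivalent to $\vert t_i\vert=\vert s_i\vert+1$ for every $i$, i.e. $t_i\in succ(s_i)$ for every $i$.

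Finally, for item (3) I would identify an infinite branch $r$ of $S_1*\ldots*S_n$ with the tuple $(\widehat{s_1},\ldots,\widehat{s_n})$ of its coordinate sequences and apply the truncation identity: $r\downharpoonleft k=(\widehat{s_1}\downharpoonleft k)*\ldots*(\widehat{s_n}\downharpoonleft k)$, which by uniqueness of decomposition lies in $S_1*\ldots*S_n$ if and only if $\widehat{s_i}\downharpoonleft k\in S_i$ for each $i$. Quantifying over all $k$ yields $r\in[S_1*\ldots*S_n]$ if and only if $\widehat{s_i}\in[S_i]$ for every $i$, which is item (3). I expect no genuine obstacle in any of this: the whole content is careful bookkeeping of string lengths together with the coordinate-projection bijection, and the only step that truly must be stated with care is the uniqueness of the decomposition $u=s_1*\ldots*s_n$, since items (1)--(4) all invoke it.
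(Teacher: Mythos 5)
Your proof is correct and follows exactly the route the paper intends: the remark is stated without proof there, with the order isomorphism $\phi$ cited as the reason the four items hold, and your argument simply makes that explicit via the uniqueness of the decomposition $u=s_1*\ldots*s_n$ and the truncation identity $(s_1*\ldots*s_n)\downharpoonleft m=(s_1\downharpoonleft m)*\ldots*(s_n\downharpoonleft m)$. Your decision to isolate the uniqueness of decomposition at the outset is a useful addition, since it is the one point the paper leaves entirely tacit.
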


\begin{defi}
Let $X$ be a space. Let $(A_n)_{n\in \omega}$ 
a sequence of sets of $X$. We say that $(A_n)_{n\in \omega}$ converges to $x$ 
if for each open subset $U$ such that $x\in U$ there exists $m\in \omega$ such that $A_k\subset U$ if $m\leq k$.
\end{defi}

\begin{lema}\label{ancla}

Let $ f: X \to Y $ a continuous function and let $(A_n)_{n\in \omega}$ be a sequence of sets of $ X $ converging to $ x $ in $ X $, then $(f[A_n])_{n\in \omega}$ 
converges to $f(x)$.
\end{lema}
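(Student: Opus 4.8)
The plan is to unwind the definition of convergence of a sequence of sets and to pull a neighborhood of $f(x)$ back through $f$ using continuity. Concretely, to show that $(f[A_n])_{n\in\omega}$ converges to $f(x)$, I would fix an arbitrary open subset $V$ of $Y$ with $f(x)\in V$ and exhibit an index $m\in\omega$ beyond which every $f[A_k]$ is contained in $V$.

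First I would invoke the continuity of $f$: since $V$ is open in $Y$ and $f(x)\in V$, the preimage $f^{\leftarrow}[V]$ is an open subset of $X$ containing $x$. This open set is precisely the neighborhood of $x$ to which the hypothesis on $(A_n)_{n\in\omega}$ will be applied.

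Next, since $(A_n)_{n\in\omega}$ converges to $x$ and $f^{\leftarrow}[V]$ is an open set with $x\in f^{\leftarrow}[V]$, the definition of convergence produces an $m\in\omega$ such that $A_k\subset f^{\leftarrow}[V]$ for all $k\geq m$. Applying $f$ and using the elementary inclusion $f[f^{\leftarrow}[V]]\subset V$, valid for any function, I obtain $f[A_k]\subset V$ for all $k\geq m$. As $V$ was an arbitrary open neighborhood of $f(x)$, this is exactly the assertion that $(f[A_n])_{n\in\omega}$ converges to $f(x)$.

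I do not expect any genuine obstacle here; the statement is a direct transport of convergence through a continuous map. The only point requiring a moment of care is the direction of the inclusion $f[f^{\leftarrow}[V]]\subset V$ — equality need not hold unless $f$ is surjective — but this inclusion is all that the argument needs. Since nothing about the sets $A_n$ is used beyond the hypothesized convergence together with continuity of $f$, the reasoning applies verbatim at the level of generality stated.
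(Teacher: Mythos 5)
Your argument is correct and complete: pulling the open neighborhood $V$ of $f(x)$ back to the open set $f^{\leftarrow}[V]\ni x$, applying the definition of convergence of $(A_n)_{n\in\omega}$, and using $f[f^{\leftarrow}[V]]\subset V$ is exactly the standard proof. The paper itself states this lemma without proof, evidently regarding it as routine, and your write-up supplies precisely the argument that is being taken for granted.
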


\begin{lema}\label{dnp}

Let $ f: X \to Y $ be a continuous and surjective function. 
Let $ A $ and $ B $ be subsets of $ Y $, such that $f^{\leftarrow}[A]$  is nowhere dense in $f^{\leftarrow}[B]$, then $A$  is nowhere dense in $B$
\end{lema}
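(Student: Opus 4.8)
The plan is to argue by contraposition: assuming that $A$ is not nowhere dense in $B$, I will exhibit a nonempty relatively open subset of $f^{\leftarrow}[B]$ inside which $f^{\leftarrow}[A]$ is dense, and this shows that $f^{\leftarrow}[A]$ is not nowhere dense in $f^{\leftarrow}[B]$ either. The only features of $f$ I intend to use are the two hypotheses in their most elementary form: by continuity, $f^{\leftarrow}[U]$ is open whenever $U$ is open, and by surjectivity, $f^{\leftarrow}[U]\neq\emptyset$ whenever $U\neq\emptyset$. Since $f^{\leftarrow}[A\cap B]=f^{\leftarrow}[A]\cap f^{\leftarrow}[B]$, replacing $A$ by $A\cap B$ changes neither nowhere-density statement, so I may assume $A\subseteq B$ from the start.

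First I would unwind the hypothesis of the contrapositive. Saying that $A$ is not nowhere dense in $B$ means that the closure of $A$ in $B$ has nonempty interior in $B$; hence there is an open set $V\subseteq Y$ with $\emptyset\neq V\cap B\subseteq\overline{A}$, where $\overline{A}$ denotes the closure of $A$ in $Y$. Put $W=V\cap B$. Then $W$ is a nonempty relatively open subset of $B$ in which $A$ is dense, i.e.\ every nonempty relatively open subset of $W$ meets $A$. The natural witness on the source side is $G=f^{\leftarrow}[V]\cap f^{\leftarrow}[B]=f^{\leftarrow}[W]$: continuity makes $G$ relatively open in $f^{\leftarrow}[B]$, and surjectivity, applied to the nonempty set $W$, makes $G$ nonempty. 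It then remains to prove that $f^{\leftarrow}[A]$ is dense in $G$, by taking an arbitrary $x\in G$ together with a basic neighborhood $O$ of $x$ and verifying $O\cap f^{\leftarrow}[A]\neq\emptyset$, which is equivalent to $f[O]\cap A\neq\emptyset$. We know $f(x)\in W\subseteq\overline{A}$, so every neighborhood of $f(x)$ meets $A$, and the aim is to convert this into a statement about the forward image $f[O]$: for a suitable basic $O$ one checks that the relevant image meets the dense set $A$ inside $V$, and then pulls a point of $A$ back through $f$ to produce the required point of $O\cap f^{\leftarrow}[A]$.

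The step I expect to be the main obstacle is exactly this transfer of density from $Y$ to $X$ across $f$. Continuity tells us how open sets and neighborhoods behave under $f^{\leftarrow}$, that is, in the pullback direction, whereas the density of $A$ at $f(x)$ is naturally a statement about neighborhoods of $f(x)$ in $Y$; bridging the two requires controlling the forward image $f[O]$, and guaranteeing that $f[O]$ reaches into $A$ rather than merely into $\overline{A}$ is the delicate point on which the whole lemma turns. I would therefore spend the bulk of the write-up making this forward-image/density-transfer step precise, organizing it so that the nonemptiness supplied by surjectivity is applied to the relatively open piece $V\cap B$ of $B$ rather than to the image of a single neighborhood; by contrast, the construction of the witness $G$ and its openness and nonemptiness are routine consequences of continuity and surjectivity and should cost little.
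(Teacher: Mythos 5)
Your plan stalls exactly at the step you yourself flag as delicate, and the obstacle is not one of organization: with the standard closure-based reading of ``nowhere dense'' that you adopt, the step is false, and so is the lemma as you have set it up. From $V\cap B\subseteq\overline{A}$ and $x\in G=f^{\leftarrow}[V\cap B]$ you only know $f(x)\in\overline{A}$; continuity gives $\overline{f^{\leftarrow}[A]}\subseteq f^{\leftarrow}[\overline{A}]$, and the reverse containment --- which is what ``$f[O]$ reaches into $A$'' would require --- can fail badly. Concretely, take $Y=B=\mathbb{R}$, $A=\mathbb{Q}$, let $X=(\mathbb{Q}\times\{0\})\cup\bigl((\mathbb{R}\setminus\mathbb{Q})\times(0,1]\bigr)\subseteq\mathbb{R}^2$, and let $f$ be the first-coordinate projection. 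Then $f$ is continuous and surjective, and $f^{\leftarrow}[A]=\mathbb{Q}\times\{0\}$ is closed in $X$ (any limit of such points in $X$ lies on the line $y=0$, and $X$ meets that line only in $\mathbb{Q}\times\{0\}$) and has empty interior (every ball around $(q,0)$ contains points $(x,y)$ with $x$ irrational and $y>0$ small), hence $f^{\leftarrow}[A]$ is nowhere dense in $f^{\leftarrow}[B]=X$; yet $A=\mathbb{Q}$ is dense in $B=\mathbb{R}$. So no amount of care in your forward-image/density-transfer step can complete the argument: the conclusion you are driving at is not true for arbitrary $A$.

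The paper's proof never meets this obstruction because it unwinds ``$A$ is not nowhere dense in $B$'' without the closure: it takes $y\in A$ and an open $V$ with $y\in B\cap V\subseteq A$ (not merely $\subseteq\overline{A}$), pulls back to get $\emptyset\neq f^{\leftarrow}[B]\cap f^{\leftarrow}[V]\subseteq f^{\leftarrow}[A]$, the nonemptiness coming from surjectivity, and is done in two lines. That unwinding is the negation of nowhere density precisely when $A$ is closed in $B$, and this is the situation in the lemma's only application (the proof of Theorem 1.2), where $A=g[E_{t_1}\times\cdots\times E_{t_n}]$ and $B=g[E_{s_1}\times\cdots\times E_{s_n}]$ are closed, $g$ being a closed map and the $E_s$ closed sets. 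The honest repair of your write-up is therefore to add the hypothesis that $A$ is closed in $B$ (equivalently, to read ``nowhere dense'' as ``empty relative interior''); under that hypothesis your witness $G$ already finishes the proof --- any point of $G$ maps into $V\cap B\subseteq A$, so $G\subseteq f^{\leftarrow}[A]$ is a nonempty relatively open subset of $f^{\leftarrow}[B]$ --- and the forward-image analysis you planned to spend the bulk of the write-up on becomes unnecessary. Your preliminary reductions (replacing $A$ by $A\cap B$, and the construction, openness and nonemptiness of $G$) are all correct; the defect is confined to, but fatal at, the step you identified.
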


\begin{proof}

Let us suppose $A$ is not nowhere dense in $B$. Then there exists $y\in A$ and an open subset $V$ of $Y$ such that $y\in B\cap V \subset A$. Let $x\in X$ be such that $f(x)=y$, then $x\in f^{\leftarrow}[B]\cap f^{\leftarrow}[V]\subset f^{\leftarrow}[A] $, which contradicts the hypothesis.
\end{proof}

\begin{defi}[{\cite[Definition 8.1]{jj}}]
Let $T$ be a tree and let $(X_s)_s\in T $ be a system of subsets of a space $X$ (called a scheme) such that $X_t\subset X_s$ whenever $ s\prec t$. 
A subset $A$ of $X$ is called an anchor for $(X_s)_s\in T $ in $X $ if either  for every $t\in [T ]$ we have $X_{t\downharpoonleft k} \cap A = \emptyset$ for some
$k \in \omega$ or the sequence $X_{t\downharpoonleft k_0},\ldots, X_{t\downharpoonleft n}\ldots $ converges to a point in $X$.
\end{defi}

\begin{teo}[{\cite[Theorem 8.13 , p. 46]{jj}}]\label{EE} A nonempty space $E$ is homeomorphic to $\mathfrak{E}$ if and only if there exists topology $\mathcal{W}$ on $E$ that witnesses the almost zero-dimensionality of $E$ and there exist a nonempty tree $T$ over a countable alphabet and subspaces $E_s$ of $E$ that are closed with
respect to $\mathcal{W}$ for each $s\in T$ such that:

\begin{enumerate}
\item $E_{\emptyset} = E$ and $E_s =\bigcup \{E_t : t \in succ(s)\}$ whenever $s \in T$,
\item each $x\in E$ has a neighborhood $U$ that is an anchor for $(E_s)_{s\in T}$ in $(E, \mathcal{W})$

\item for each $s \in T$ and $t \in succ(s)$, we have that $E_t$ is nowhere dense in $E_s$,
and
\item  $E$ is $\{Es : s \in T\}$-cohesive.
\end{enumerate}
Then $E$ is homeomorphic to $\mathfrak{E}$

\end{teo}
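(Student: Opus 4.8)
The plan is to establish the two implications separately, taking the canonical copy of $\mathfrak{E}$ as the model for the ``only if'' direction and a recursive matching of $\mathcal{W}$-clopen decompositions for the substantive ``if'' direction. Throughout, write $\tau$ for the given (fine) topology of $E$, and keep in mind that the anchor condition (2) is evaluated in the coarse witness topology $\mathcal{W}$, while cohesion (4) and nowhere density (3) refer to $\tau$.

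For necessity, suppose $E\cong\mathfrak{E}$; it suffices to equip $\mathfrak{E}$ itself with the required data and transport it along the homeomorphism. Let $\mathcal{W}$ be the topology that $\mathfrak{E}$ inherits as a subspace of the product $\Q^{\omega}$ (each factor with its usual zero-dimensional topology); this $\mathcal{W}$ is coarser than the $\ell^{2}$-topology and witnesses almost zero-dimensionality by Proposition \ref{eazd}. Take the alphabet $\Q$, the full tree $T=\Q^{<\omega}$, and for $s=(q_{0},\ldots,q_{k-1})$ set $E_{s}=\{x\in\mathfrak{E}:x_{i}=q_{i}\text{ for }i<k\}$, which is $\mathcal{W}$-closed. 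Condition (1) is immediate, and each $E_{s}$ is a translated copy of $\mathfrak{E}$. Condition (3) holds because inside $E_{s}$ the set obtained by fixing one further coordinate $x_{k}=q$ has empty $\tau$-interior (a small $\ell^{2}$-ball meets cylinders over nearby rationals) while being closed. For the anchor condition (2), fix a point and take for $U$ a bounded $\ell^{2}$-neighborhood; given a branch $t\in[T]=\Q^{\omega}$, if $t\in\ell^{2}$ then the cylinders $E_{t\downharpoonleft k}$ $\mathcal{W}$-converge to the point $t\in\mathfrak{E}$, whereas if $t\notin\ell^{2}$ then the partial norms along $t$ diverge, so $E_{t\downharpoonleft k}\cap U=\emptyset$ for all large $k$; thus $U$ is an anchor. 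Finally (4) is the $\{E_{s}\}$-cohesion of $\mathfrak{E}$, which follows from the cohesion of $\mathfrak{E}$ together with the fact that every $E_{s}$ is a copy of $\mathfrak{E}$ and the product-cohesion remark of Section 2.

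The hard direction is sufficiency. Given an abstract $(E,\tau)$ with witness $\mathcal{W}$, tree $T$ over a countable alphabet, and scheme $(E_{s})_{s\in T}$ satisfying (1)--(4), the plan is to build a homeomorphism onto the canonical model $M$ above by matching the two tree schemes level by level. Using that $\mathcal{W}$ is zero-dimensional and each $E_{s}$ is $\mathcal{W}$-closed, I would construct recursively, for each height $k$, a $\mathcal{W}$-clopen partition $\mathcal{P}_{k}$ of $E$ refining $\{E_{s}:|s|=k\}$, a matching partition $\mathcal{Q}_{k}$ of $M$, and an order-preserving bijection $\mathcal{P}_{k}\to\mathcal{Q}_{k}$; after refining one may arrange that both partitions split into countably many pieces at every node. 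At each successor step condition (3), transported where needed by Lemma \ref{dnp}, guarantees that the successor pieces are nowhere dense, hence that no piece is isolated and the split patterns agree on the two sides, so the bijection extends. Condition (4) keeps the construction from factoring through a zero-dimensional quotient, which is what forces the target to be the one-dimensional $\mathfrak{E}$ rather than a zero-dimensional space. The coherent system of bijections then defines a map $\Phi\colon E\to M$ sending each point $x$ to the unique point of $M$ whose address matches the sequence $\bigl(\mathcal{P}_{k}(x)\bigr)_{k\in\omega}$ of pieces containing $x$, and $\Phi$ is a bijection at the level of $\mathcal{W}$.

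The crux is upgrading this $\mathcal{W}$-level bijection to a homeomorphism for the fine topologies, and this is exactly the role of the anchor condition (2). For a point $x$ with anchor neighborhood $U$, the nested $\mathcal{W}$-clopen pieces $\mathcal{P}_{k}(x)$ lie along a branch of $T$ and, being trapped in $U$, fall under the convergent alternative of the anchor; hence they $\mathcal{W}$-converge to $x$, and Lemma \ref{ancla} propagates this convergence through the continuous matching so that the images shrink to $\Phi(x)$, giving continuity of $\Phi$ and, by symmetry, of $\Phi^{-1}$ for $\tau$. I expect the main obstacle to be the simultaneous bookkeeping of the two topologies during the recursion: each $\mathcal{W}$-clopen refinement must be chosen fine enough in the $\tau$-metric that the anchor-driven convergence survives to the limit, yet coarse enough to respect the tree order and the nowhere-density branching, and reconciling these competing demands at every level is the technical heart of the argument.
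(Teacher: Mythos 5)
You should first be aware that the paper does not prove Theorem \ref{EE} at all: it is quoted verbatim from Dijkstra and van Mill \cite[Theorem 8.13]{jj} and used as a black box, and in that memoir the proof occupies a long development (embeddings into the Lelek fan, Sierpi\'nski stratifications, a detailed analysis of anchored systems) that your sketch does not reconstruct. Your necessity direction is essentially sound and standard: taking $\mathcal{W}$ to be the topology inherited from $\Q^{\omega}$, $T=\Q^{<\omega}$, and $E_s$ the rational cylinders does verify (1)--(3), and your anchor argument (bounded balls, with the dichotomy $t\in\ell^{2}$ versus divergence of the partial norms along $t$) is correct. The one soft spot is (4): $\{E_s\}$-cohesion does not follow from ``$\mathfrak{E}$ is cohesive and each $E_s$ is a copy of $\mathfrak{E}$,'' since cohesion of each $E_s$ separately does not produce a single neighborhood working for all $E_s$ simultaneously; what you actually need is Erd\H{o}s's theorem that every nonempty clopen subset of $\mathfrak{E}$ (hence of each isometric copy $E_s$) is unbounded, so that a bounded ball can contain no nonempty proper clopen subset of any $E_s$. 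The product-cohesion remark you invoke is irrelevant here.

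The genuine gap is the sufficiency direction, which is the entire content of the theorem, and your recursion does not go through as described. First, the level sets $\{E_s:\vert s\vert=k\}$ are only $\mathcal{W}$-closed and in general overlap, so there need be no $\mathcal{W}$-clopen partition of $E$ refining them, and replacing the scheme by a partition would destroy conditions (1)--(3); the claim that ``the split patterns agree on the two sides, so the bijection extends'' is precisely the heart of the matter and is asserted rather than argued. Second, and more fundamentally, even granting a bijection $\Phi$ matching two $\mathcal{W}$-clopen systems, this controls only the coarse zero-dimensional topologies: nothing in your sketch yields $\tau$-continuity of $\Phi$ or $\Phi^{-1}$. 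The anchor condition (2) gives $\mathcal{W}$-convergence of the sets $E_{t\downharpoonleft k}$ along a branch, not $\tau$-convergence of your pieces $\mathcal{P}_k(x)$, and Lemma \ref{ancla} cannot upgrade coarse convergence to fine convergence; similarly Lemma \ref{dnp} transfers nowhere density from preimages to images along a continuous surjection, but no such map is available in your recursion. The sentence ``condition (4) keeps the construction from factoring through a zero-dimensional quotient'' is where the real theorem lives: cohesion must be used concretely to show that the fine topology is recovered from the scheme data, and doing so is the delicate core of Dijkstra--van Mill's argument. As it stands, your construction proves at most that $(E,\mathcal{W})$ is homeomorphic to some zero-dimensional model, which is far weaker than the conclusion; the theorem should be treated as cited, not as something recoverable by a level-by-level back-and-forth.
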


By the Theorem \ref{EE} 
there is a topology $\mathcal{W}$ for $\mathfrak{E}$ which is witnesses to the almost zero dimensionality of $\mathfrak{E}$, a countable tree $ T $ and a family of sets $\mathcal{E}=\{ E_s :s\in T\}$ 
which are closed with respect to $\mathcal{W}$ which satisfy the conditions of Theorem \ref{EE} for $\mathfrak{E}$.
Let $\mathcal{W}^n$ the topology of $(\mathfrak{E},\mathcal{W})^n$, $T^n=\{s_1*\ldots *s_n: s_1, \ldots, s_n\in T \textit{ and } \vert s_1\vert =\ldots =\vert s_n\vert\}$ and for each $s_1*\ldots *s_n\in T^n$ let $E_{s_1*\ldots* s_n}$ be the subset $E_{s_1}\times \ldots \times E_{s_n}$ of $\mathfrak{E}^n$. 

\begin{lema}\label{at}
The collections $\mathcal{W}^n$, $T^n$ and $\mathcal{E}^n=\{E_{s_1*\ldots* s_n}: s_1*\ldots *s_n\in T^n \}$ satisfy the
conditions of Theorem \ref{EE} for $\mathfrak{E}$.
\end{lema}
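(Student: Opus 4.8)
The plan is to check the four hypotheses of Theorem \ref{EE} one at a time, taking the space there to be $\mathfrak{E}^n$ with the product topology, the witnessing topology $\mathcal{W}^n$, the tree $T^n$, and the scheme $\mathcal{E}^n$. First I would settle the standing requirements. Since $\mathcal{W}$ is zero-dimensional and coarser than the topology of $\mathfrak{E}$, its finite power $\mathcal{W}^n$ has a clopen base and is coarser than the product topology, and forming products of $\mathcal{W}$-closed local-base elements shows that every point of $\mathfrak{E}^n$ has a $\mathcal{W}^n$-closed local base; by Proposition \ref{eazd}, $\mathcal{W}^n$ witnesses the almost zero-dimensionality of $\mathfrak{E}^n$. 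By Remark \ref{tree1}, $T^n$ is a nonempty tree over the countable alphabet $A\times\ldots\times A$, and each $E_{s_1*\ldots*s_n}=E_{s_1}\times\ldots\times E_{s_n}$ is a product of $\mathcal{W}$-closed sets, hence $\mathcal{W}^n$-closed.

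Conditions (1) and (4) I expect to be quick. For the root, $\emptyset*\ldots*\emptyset$ gives $E_{\emptyset*\ldots*\emptyset}=\mathfrak{E}\times\ldots\times\mathfrak{E}=\mathfrak{E}^n$; for the successor identity, Remark \ref{tree1}(2) identifies $succ(s_1*\ldots*s_n)$ with the strings $t_1*\ldots*t_n$ where $t_i\in succ(s_i)$ for every $i$, and distributing the product over the unions $E_{s_i}=\bigcup\{E_{t_i}:t_i\in succ(s_i)\}$ yields $E_{s_1}\times\ldots\times E_{s_n}=\bigcup\{E_{t_1}\times\ldots\times E_{t_n}:t_i\in succ(s_i)\}=\bigcup\{E_t:t\in succ(s)\}$. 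For cohesion, the remark recalled above that a power of an $\{A_s\}$-cohesive space is $\{A_{s_1}\times\ldots\times A_{s_n}\}$-cohesive (see \cite{jj}) gives that $\mathfrak{E}^n$ is $\{E_{s_1}\times\ldots\times E_{s_n}:s_1,\ldots,s_n\in T\}$-cohesive, since $\mathfrak{E}$ is $\{E_s:s\in T\}$-cohesive; as $\mathcal{E}^n$ is a subfamily of that family, cohesion with respect to $\mathcal{E}^n$ follows at once.

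For condition (3) I would rely on the standard facts that, in a finite product, the closure of a product is the product of the closures and the interior of a product is the product of the interiors. Fixing $s=s_1*\ldots*s_n$ and $t=t_1*\ldots*t_n\in succ(s)$, the closure of $E_t$ in $E_s$ is $\prod_i \mathrm{cl}_{E_{s_i}}(E_{t_i})$, and its interior in $E_s$ is $\prod_i \mathrm{int}_{E_{s_i}}(\mathrm{cl}_{E_{s_i}}(E_{t_i}))$; since each $E_{t_i}$ is nowhere dense in $E_{s_i}$, at least one factor has empty interior, so the whole interior is empty and $E_t$ is nowhere dense in $E_s$.

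The hard part will be condition (2), the anchor property, which I would verify coordinatewise. Given $x=(x_1,\ldots,x_n)$, choose for each $i$ a neighborhood $U_i$ of $x_i$ that is an anchor for $(E_s)_{s\in T}$ in $(\mathfrak{E},\mathcal{W})$ and put $U=U_1\times\ldots\times U_n$. Fix $\widehat{t}=(\widehat{t_1},\ldots,\widehat{t_n})\in[T^n]$, so that each $\widehat{t_i}\in[T]$ by Remark \ref{tree1}(3) and $E_{\widehat{t}\downharpoonleft k}=\prod_i E_{\widehat{t_i}\downharpoonleft k}$ by Remark \ref{tree1}(4). For each $i$ the anchor dichotomy supplies either some $k_i$ with $E_{\widehat{t_i}\downharpoonleft k_i}\cap U_i=\emptyset$, or a limit $p_i$ with $(E_{\widehat{t_i}\downharpoonleft k})_k\to p_i$ in $(\mathfrak{E},\mathcal{W})$. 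If some coordinate is of the first type, then the corresponding factor of $E_{\widehat{t}\downharpoonleft k_i}$ misses $U_i$, so $E_{\widehat{t}\downharpoonleft k_i}\cap U=\emptyset$; if every coordinate is of the second type, a basic-box argument shows that $(E_{\widehat{t}\downharpoonleft k})_k$ converges to $(p_1,\ldots,p_n)$ in $(\mathfrak{E}^n,\mathcal{W}^n)$. Either alternative of the anchor definition is then met, so $U$ is the required anchor. The only delicate point is keeping the two alternatives straight across coordinates; once that is organized, all four conditions of Theorem \ref{EE} hold for $\mathcal{W}^n$, $T^n$ and $\mathcal{E}^n$.
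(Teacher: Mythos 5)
Your proposal is correct and follows essentially the same route as the paper: verify the four conditions of Theorem \ref{EE} coordinatewise, using Remark \ref{tree1} to identify successors and bodies in $T^n$ with tuples of successors and bodies in $T$, the $J$-set dichotomy for the anchor condition, and the product-cohesion remark from \cite{jj} for condition (4). Your treatment of condition (3) via interiors and closures of products is slightly more explicit than the paper's, but it is the same underlying argument.
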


\begin{proof}
Since $T$ is a tree over the countable alphabet $A$, $T^n$ is a tree over the countable alphabet $A^n$ (Definition \ref{tree}). It is not difficult to prove that $\mathcal{W}^n$ witnesses that $\mathfrak{E}^n$ is almost zero-dimensional.
Moreover, it is clear that each $E_{s_1*\ldots* s_n}$ is closed in $\mathfrak{E}^n$. On the other hand $E_{\emptyset}=\mathfrak{E}$, so $E_{\emptyset *\ldots* \emptyset}=E_{\emptyset} \times \ldots \times E_{\emptyset}=\mathfrak{E}^n$. Furthermore,
$$E_{s_1}\times \ldots \times  E_{s_n}= \bigcup \{E_{t_1}: t_1\in  succ(s_1)\}\times \ldots \times \bigcup\{E_{t_n}: t_1\in  succ(s_n)\}= $$
$$\bigcup\{E_{t_1}\times \ldots \times  E_{t_n}: t_1*\ldots* t_n \in succ(s_1*\ldots * s_n)\}=$$
$$\bigcup\{E_{t_1*\ldots *t_n}: t_1*\ldots* t_n \in succ(s_1*\ldots * s_n)\}$$
(see Remark \ref{tree1}).
\\
Now we are going to prove that $\mathcal{W}^n$, $T^n$ and $\mathfrak{E}^n$ satisfy condition (2) of Theorem \ref{EE}. Let $(x_1,\ldots x_n)\in \mathfrak{E}^n$ and, for each $i\in \{1,\ldots n\}$, let $U_i$ be a neighborhood of $x_i$ which is an anchor for $\mathcal{E}$. Let $\widehat{t}\in T^n$; then $\widehat{t}=(\widehat{t_1},\ldots,\widehat{t_n})$ with $\widehat{t_i}\in [T]$ for each $i\in\{1,\ldots, n\}$ (Remark \ref{tree1}). We define $J=\{i\in \{1,\ldots, n\}: \textit{ there exists } m\in \N \textit{ such that } E_{\widehat{t_i}\downharpoonleft m } \cap U_i=\emptyset\}$. First assume that $J\neq \emptyset$; let $k\in J$, then $E_{(\widehat{t_1},\ldots ,\widehat{t_n})\downharpoonleft k} \cap(U_1\times \ldots\times U_n)=(E_{\widehat{t_1}\downharpoonleft k}\times \ldots \times E_{\widehat{t_n}\downharpoonleft k}\cap(U_1\times \ldots\times U_n)= ( E_{\widehat{t_1}\downharpoonleft k} \cap U_1)\times \ldots \times( E_{\widehat{t_n}\downharpoonleft k} \cap U_n)=\emptyset.$ Now assume that $J=\emptyset$. Since $U_i$ is an anchor for $\mathcal{E}$ for each $i\in \{1,\ldots n\}$, the
sequence $(E_{\widehat{t_i}\downharpoonleft j} )_{j< \omega}$ converges to a point $z_i$ in $\mathfrak{E}$. Therefore, the sequence  $(E_{\widehat{t_1}\downharpoonleft j} \times \ldots \times E_{\widehat{t_n}\downharpoonleft j} )_{j< \omega}$ converges to $(z_1,\ldots, z_n)\in \mathfrak{E}^n$. But $E_{\widehat{t_1}\downharpoonleft j} \times \ldots \times E_{\widehat{t_n}\downharpoonleft j} = E_{\widehat{t_1}\downharpoonleft j *\ldots * \widehat{t_n}\downharpoonleft j}=E_{(\widehat{t_1}\downharpoonleft j ,\ldots , \widehat{t_n}\downharpoonleft j)} = E_{\widehat{t}\downharpoonleft j}$ (see Remark \ref{tree1}). Hence, the sequence $(E_{\widehat{t}\downharpoonleft j})_{j<\omega}$ converges to $(z_1,\ldots, z_n)$.
We now verify condition (3) of Theorem \ref{EE}. Suppose $t_1*\ldots *t_n \in succ(s_1*\ldots * s_n)$, then for each $i\in \{1,\ldots n\}$
$t_i \in succ(s_i)$ (Remark \ref{tree1}). Thus, $E_{t_i}$ is nowhere dense in $E_{s_i}$ for each $i\in \{1,\ldots n\}$. This implies
that, $E_{t_1}\times \ldots \times E_{t_n}$ is nowhere dense in $E_{s_1}\times \ldots \times E_{s_n}$.
The condition (4) of Theorem \ref{EE} follows from the Remark 5.2 in \cite{jj}.
\end{proof}

\textbf{
Proof of Theorem 1.2}
\begin{proof} 
Let $ n \in \N$ be fixed. We are going to prove that $\mathcal{F}_n(\mathfrak{E})$ is homeomorphic to $\mathfrak{E}$ using Theorem $\ref{EE}$. Because of Lemma \ref{at} we know that if the topology $\mathcal{W}$, the tree $T$ and the family $\{E_s:s\in T\}$ satisfy the conditions of Theorem \ref{EE} for $\mathfrak{E}$, then the product topology $(\mathfrak{E}, \mathcal{W})^n$, which we denote here as $\mathcal{W}^n$, $T^n=\{s_1*\ldots*s_n:s_1,\ldots, s_n\in T\textit{ and } \vert s_1\vert =\ldots= \vert s_n\vert\}$ (Definition \ref{tree}) and the family $\{E_{s_1*\ldots*s_n}:s_1*\ldots*s_n \in T^n\}$ where $E_{s_1*\ldots*s_n}= E_{s_1}\times \ldots \times E_{s_n}$ for each $s_1,\ldots, s_n\in T$ 
with $\vert s_1\vert =\ldots= \vert s_n\vert$, satisfy all the conditions of Theorem \ref{EE}.

Let, $g:\mathfrak{E}^n\to \mathcal{F}_n(\mathfrak{E})$, be define by as $g(x_1,\ldots, x_n)=\{x_1, \ldots, x_n\}$. Let $\mathcal{W}^\prime$ be the Vietoris topology in $\mathcal{F}_n(\mathfrak{E}, \mathcal{W})$. The tree $T^\prime$ that we are going to consider is $T^\prime = T^n$, and the family $\mathcal{S}^\prime$ of subsets of $\mathcal{F}_n(\mathfrak{E})$ indexing by $T^n$ that we are going to prove to be closed with respect to $\mathcal{W}^n$ is $\mathcal{S}^\prime=\{H_{s_1*\ldots*s_n}: s_1*\ldots*s_n\in T^n \}$ where $H_{s_1*\ldots*s_n}:=g [E_{s_1*\ldots*s_n}]$ for each $s_1*\ldots*s_n\in T^n$. We will prove then that $\mathcal{W}^\prime$, $T^\prime$ and $\mathcal{S}^\prime$ satisfy the conditions required in Theorem  \ref{EE} for $\mathcal{F}_n(\mathfrak{E})$.

Indeed, the fact that the Vietoris topology in $\mathcal{F}(\mathfrak{E}, \mathcal{W})$ witnesses that $\mathcal{F}_n(\mathfrak{E})$ is almost zero-dimensional
fallows from the proof of Proposition \ref{HZD}. By Lemma \ref{A2}, $\mathcal{F}_n(\mathfrak{E})$ is $\{g[E_{s_1}\times \ldots\times E_{s_n}]:s_1*\ldots* s_n\in T^n\}$-cohesive. That is, $\mathcal{F}_n(\mathfrak{E})$ is $\{H_{s_1*\ldots*s_n}]:s_1*\ldots* s_n\in T^n\}$-cohesive. Moreover, $T^\prime = T^n$ is a tree over a countable alphabet. On the other hand, for each $s \in T$, $E_s$ is a closed
subset of $(\mathfrak{E}, \mathcal{W})$, hence for $s_1, \ldots , s_n \in T$ satisfying $\vert s_1\vert= \ldots= \vert s_n \vert$, $E_{s_1}\times \ldots\times E_{s_n}$ is closed in $(\mathfrak{E}, \mathcal{W})^n$. Additionaly, since the function $g$ is closed, $g[E_{s_1}\times \ldots\times E_{s_n}]$ is closed in $(\mathcal{F}_n(\mathfrak{E}),\mathcal{W}^\prime)$.

Now we are going to prove that $\mathcal{W}^\prime$, $T^\prime$ and $\mathcal{S}^\prime$ satisfy conditions (1), (2) and (3) of Theorem \ref{EE}. 
For $\emptyset=\emptyset *\ldots * \emptyset \in T^n$, $H_{\emptyset *\ldots * \emptyset }=g[E_{\emptyset}\times \ldots \times E_{\emptyset}]=g[\mathfrak{E}]=\mathcal{F}_n(\mathfrak{E})$. Let $s_1*\ldots * s_n$ be an
element in $T^n$. 
Using Lemma \ref{at}, we have that $H_{s_1*\ldots *s_n}:=g [E_{s_1*\ldots *s_n}]=g[\bigcup \{E_{t_1*\ldots *t_n}: t_1*\ldots *t_n \in succ(s_1*\ldots *s_n)\}]= \bigcup \{g[E_{t_1*\ldots *t_n}]: t_1*\ldots *t_n \in succ(s_1*\ldots *s_n)\}=\bigcup \{H_{t_1*\ldots *t_n}: t_1*\ldots *t_n \in succ(s_1*\ldots *s_n)\} $. This proves that $\mathcal{W}^\prime$, $T^\prime$ and $\mathcal{S}^\prime$ satisfy condition (1) of Theorem \ref{EE}.

In order to prove that $\mathcal{W}^\prime$, $T^\prime$ and $\mathcal{S}^\prime$ satisfy condition (2) of Theorem \ref{EE}, we take $F\in \mathcal{F}_n(\mathfrak{E})$.
Assume that $F=\{x_1,\ldots, x_n\}$. For each $j\leq n$ there exists a neighborhood $U_j$ of $x_j$ which is an
anchor for $\mathfrak{E}$. Let $ t \in  [T^\prime]$; then $ t = (t_1,\ldots, t_n)$ with $t_i\in [T]$ for each $i \in \{1,\ldots n\}$ (Remark \ref{tree1}).
We define $J = \{i\in\{1,\ldots n\}: \textit{ there exists } m \in \N  \textit { such that } E_{t_i\downharpoonleft m}\cap U_i = \emptyset\}$.
Assume that $J\neq\emptyset$; let $k \in J$. Then $U_i \cap E_{t_i\downharpoonleft k }= $; for all $i, j\in \{1,\ldots, n\}$. Hence, $g^\leftarrow(F) \
(E_{t_1\downharpoonleft k }\times \ldots\times E_{t_n\downharpoonleft k }) = \emptyset$. Thus, there exists neighborhoods $ V_j $ of $x_j$ for each $j $ such that if
$w \in g(F)$, then there exists a permutation $h:\{1,\ldots n\}\to \{1,\ldots n\}$, such that $w \in V_h =
V_{h(1)}\times \ldots \times V_{h(n)} \subset \mathfrak{E}^n\setminus ( E_{t_1\downharpoonleft k }\times \ldots \times E_{t_n\downharpoonleft k })$.
Therefore, $V =\bigcup_{h\in P} V_h  \subset \mathfrak{E}^n\setminus ( E_{t_1\downharpoonleft k }\times \ldots \times E_{t_n\downharpoonleft k })$ (where $P$ is the set of permutations of $\{1,\ldots, n\}$). Let $\mathcal{U}= \langle W_1,\ldots, W_n\rangle$, where $W_i = U_i\cap V_i$, thus $F\in \mathcal{U}$, and $g(E_{t_1\downharpoonleft k }\times \ldots \times E_{t_n\downharpoonleft k })\cap \mathcal{U} =\emptyset $.
That is $H_{t\downharpoonleft k}\cap \mathcal{U} = \emptyset$. Then $ \mathcal{U}$ is an anchor.
Now suppose that $J = \emptyset$. For every $i \in \{1,\ldots n\}$, since $U_i$ is an anchor, the sequence $(E_{t_i\downharpoonleft j} )_{j<\omega}$
converges to a point $y_i$. Thus, $(E_{t_1\downharpoonleft j}\times \ldots \times E_{t_n\downharpoonleft j} )_{j<\omega}$ converges to $(y_1,\ldots, yn)$. Since $g$ is
continuous by the Lemma \ref{ancla}, we have that $(g((E_{t_1\downharpoonleft j}\times \ldots \times E_{t_n\downharpoonleft j}) )_{j<\omega}$ converges to $g(y_1,\ldots, y_n)$. That is, $(H_{t\downharpoonleft j})_{j<\omega}$
converges to $g(y_1,\ldots,  y_n)$.
Finally, we will prove that$\mathcal{W}^\prime$, $T^\prime$ and $\mathcal{S}^\prime$ satisfy condition (3) of Theorem \ref{EE}. If $t_1* \ldots * tn \in succ(s_1*\ldots * s_n)$, then for each $i \in \{1,\ldots n\}$, $t_i \in succ(s_i)$ (Remark \ref{tree1}). Thus, $E_{t_i}$ is nowhere dense in $E_{s_i}$ for each $i \in \{1,\ldots n\}$. This implies that, for each permutation $h: \{1,\ldots n\}\to  \{1,\ldots n\}$,
$E_{t_{h(1)}} \times \ldots \times E_{t_{h(n)}}$ is nowhere dense in $E_{s_{h(1)}} \times \ldots \times E_{s_{h(n)}}$. Therefore,
$\bigcup_{h\in P} (E_{t_{h(1)}} \times \ldots \times E_{t_{h(n)}})$ is nowhere dense in $\bigcup_{h\in P} (E_{s_{h(1)}} \times \ldots \times E_{s_{h(n)}})$ 
 where $P$ is the set of permutations of $\{1,\ldots n\}$.
Note that 
$$g^\leftarrow[g[E_{t_1}\times \ldots \times E_{t_n}]]= \bigcup_{h\in P} (E_{t_{h(1)}} \times \ldots \times E_{t_{h(n)}})\subset g^\leftarrow[g[E_{s_1}\times \ldots \times E_{s_n}]]$$
and
$$g^\leftarrow[g[E_{s_1}\times \ldots \times E_{s_n}]]= \bigcup_{h\in P} (E_{s_{h(1)}} \times \ldots \times E_{s_{h(n)}}).$$
 As the finite union of nowhere dense sets is nowhere
dense, by the Lemma \ref{dnp} $g[E_{t_1}\times \ldots \times E_{t_n}]$ is nowhere dense in $g[E_{s_1}\times \ldots \times E_{s_n}]$. By Theorem \ref{EE}, all the above proves that $\mathcal{F}_n(\mathfrak{E})$ is homeomorphic to $\mathfrak{E}$.
\end{proof}

\section{ Applications}

\begin{defi}
Let $X$ be a space. 
 A space $ Y $ is called an $ X $ -factor, if there is a space $ Z $ such that
$ Y \times Z $ is homeomorphic to $ X $.
\end{defi}

It is known that $X$ is a $\mathfrak{E}_{\mathrm{c}}$-factor or a $\mathfrak{E}$-factor if only if $X$ can be embedded as a closed
subset of $\mathfrak{E}_{\mathrm{c}}$ or $\mathfrak{E}$ respectively (see {\cite[Theorem 3.2]{j}} and {\cite[Theorem 9.2]{jj}}). Using these facts we have the following corollary.

\begin{cor}
Let $n\in \N$, if $X$ is a $\mathfrak{E}_{\mathrm{c}}$-factor or a $\mathfrak{E}$-factor, then $\mathcal{F}_n(X)$ is a $\mathfrak{E}_{\mathrm{c}}$-factor or $\mathfrak{E}$-factor 
respectively.
\end{cor}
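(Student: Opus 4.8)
The plan is to reduce the statement to Theorems \ref{ee1} and \ref{EE1} by means of the closed-embedding characterization of factors quoted immediately before the corollary. Suppose $X$ is an $\mathfrak{E}_{\mathrm{c}}$-factor; the $\mathfrak{E}$ case is word-for-word the same, using Theorem \ref{EE1} in place of Theorem \ref{ee1}. By the cited characterization, $X$ embeds as a closed subset of $\mathfrak{E}_{\mathrm{c}}$, so I fix a closed embedding $\iota\colon X\to\mathfrak{E}_{\mathrm{c}}$. It will suffice to produce a closed embedding of $\mathcal{F}_n(X)$ into $\mathfrak{E}_{\mathrm{c}}$, since by the same characterization this exhibits $\mathcal{F}_n(X)$ as an $\mathfrak{E}_{\mathrm{c}}$-factor.

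First I would pass to the induced map on symmetric products, $\mathcal{F}_n(\iota)\colon\mathcal{F}_n(X)\to\mathcal{F}_n(\mathfrak{E}_{\mathrm{c}})$ given by $F\mapsto\iota[F]$, and check that it is a closed embedding. Continuity is the standard fact that a continuous map induces a continuous map of hyperspaces; injectivity is immediate from the injectivity of $\iota$; and the inverse onto the image is continuous because it is the hyperspace map induced by the continuous map $\iota^{-1}\colon\iota[X]\to X$. Hence $\mathcal{F}_n(\iota)$ is an embedding, and its image is exactly $\{\,G\in\mathcal{F}_n(\mathfrak{E}_{\mathrm{c}}):G\subseteq\iota[X]\,\}$.

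Next I would verify that this image is closed in $\mathcal{F}_n(\mathfrak{E}_{\mathrm{c}})$. Since $\iota[X]$ is closed in $\mathfrak{E}_{\mathrm{c}}$, the set $U=\mathfrak{E}_{\mathrm{c}}\setminus\iota[X]$ is open, and $\langle\mathfrak{E}_{\mathrm{c}},U\rangle=\{\,G:G\cap U\neq\emptyset\,\}$ is a canonical Vietoris-open subset of $\mathcal{K}(\mathfrak{E}_{\mathrm{c}})$. Its complement $\{\,G:G\subseteq\iota[X]\,\}$ is therefore closed, and intersecting with $\mathcal{F}_n(\mathfrak{E}_{\mathrm{c}})$ keeps it closed there. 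Thus $\mathcal{F}_n(X)$ embeds as a closed subset of $\mathcal{F}_n(\mathfrak{E}_{\mathrm{c}})$.

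Finally I would invoke Theorem \ref{ee1} to identify $\mathcal{F}_n(\mathfrak{E}_{\mathrm{c}})$ with $\mathfrak{E}_{\mathrm{c}}$; composing the closed embedding above with this homeomorphism realizes $\mathcal{F}_n(X)$ as a closed subspace of $\mathfrak{E}_{\mathrm{c}}$, and the characterization then completes the argument. I expect the only point requiring real care to be the verification that $\mathcal{F}_n$ preserves closed embeddings; every ingredient is a standard property of the Vietoris hyperspace (all available in \cite{Sub}), so once these are in hand the proof is a formal chase through the functor $\mathcal{F}_n$.
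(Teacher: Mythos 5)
Your proposal is correct and follows exactly the paper's argument: use the closed-embedding characterization of factors, observe that a closed embedding $X\hookrightarrow\mathfrak{E}_{\mathrm{c}}$ (resp.\ $\mathfrak{E}$) induces a closed embedding $\mathcal{F}_n(X)\hookrightarrow\mathcal{F}_n(\mathfrak{E}_{\mathrm{c}})$, and then apply Theorem \ref{ee1} (resp.\ Theorem \ref{EE1}). The only difference is that you spell out the verification that $\mathcal{F}_n$ preserves closed embeddings, which the paper asserts without proof.
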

\begin{proof}
If $X$ is a $\mathfrak{E}_{\mathrm{c}}$-factor or a $\mathfrak{E}$-factor, then $X$ can be embedded as a closed subset of $\mathfrak{E}_{\mathrm{c}}$ or $\mathfrak{E}$ respectively. Then $\mathcal{F}_n(X)$ can be embedded as a closed subset of $\mathcal{F}_n(\mathfrak{E}_{\mathrm{c}})$ or $\mathcal{F}_n(\mathfrak{E})$ respectively. By Theorem \ref{EE1} and Theorem \ref{ee1}, we have that $\mathcal{F}_n(\mathfrak{E}_{\mathrm{c}})$ and $\mathcal{F}_n(\mathfrak{E})$ 
are homeomorphic to $\mathfrak{E}_{\mathrm{c}}$ and $\mathfrak{E}$ respectively. Thus $\mathcal{F}_n(X)$ is $\mathfrak{E}_{\mathrm{c}}$-factor or $\mathfrak{E}$-factor respectively.
\end{proof}

Another known fact is that any open subset of $\mathfrak{E}$ or $\mathfrak{E}_{\mathrm{c}}$ 
is homeomorphic to $\mathfrak{E}$ or $\mathfrak{E}_{\mathrm{c}}$ respectively (see {\cite[Theorem 3.2]{j}} and {\cite[Corollary 8.15]{jj}}). Using that fact, and Theorems \ref{EE1} and \ref{ee1} we have the following corollary.

\begin{cor}
Let $X_1=\mathfrak{E}$ and $X_2=\mathfrak{E}_{\mathrm{c}}$ and $n\in \N$
For each $k< n$ we have that $\mathcal{F}_{n}(X_i)\setminus \mathcal{F}_{k}(X_i) $ 
is homeomorphic to $X_i$.
\end{cor}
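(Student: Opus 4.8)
The plan is to show that $\mathcal{F}_k(X_i)$ is a closed subset of $\mathcal{F}_n(X_i)$, so that its complement $\mathcal{F}_n(X_i)\setminus \mathcal{F}_k(X_i)$ is open, and then to invoke the two facts recalled just above the statement: that $\mathcal{F}_n(X_i)$ is homeomorphic to $X_i$ (Theorems \ref{EE1} and \ref{ee1}), and that every nonempty open subset of $\mathfrak{E}$ or $\mathfrak{E}_{\mathrm{c}}$ is homeomorphic to $\mathfrak{E}$ or $\mathfrak{E}_{\mathrm{c}}$, respectively. The work is thus reduced to a single openness verification, after which the conclusion follows by chaining these two results.

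The only step requiring an argument is the openness of $\mathcal{F}_n(X_i)\setminus \mathcal{F}_k(X_i)$ in $\mathcal{F}_n(X_i)$. First I would fix $F\in \mathcal{F}_n(X_i)\setminus \mathcal{F}_k(X_i)$, so that $F=\{p_1,\ldots, p_m\}$ with $k+1\leq m\leq n$ distinct points. Since $X_i$ is metrizable, hence Hausdorff, I can choose pairwise disjoint open sets $U_1,\ldots, U_m$ with $p_j\in U_j$ for each $j\leq m$. Then $\langle U_1,\ldots, U_m\rangle \cap \mathcal{F}_n(X_i)$ is a canonical open neighborhood of $F$ in $\mathcal{F}_n(X_i)$, and every member of it must meet each of the $m$ pairwise disjoint sets $U_1,\ldots, U_m$, hence has at least $m\geq k+1$ points. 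Therefore this neighborhood is contained in $\mathcal{F}_n(X_i)\setminus \mathcal{F}_k(X_i)$, which proves the complement is open.

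With openness established, I would finish as follows. Because $k<n$ and $X_i$ is infinite, the set $\mathcal{F}_n(X_i)\setminus \mathcal{F}_k(X_i)$ is nonempty, since it contains every $n$-point subset of $X_i$. By Theorems \ref{EE1} and \ref{ee1}, $\mathcal{F}_n(X_i)$ is homeomorphic to $X_i$, and under such a homeomorphism the nonempty open set $\mathcal{F}_n(X_i)\setminus \mathcal{F}_k(X_i)$ corresponds to a nonempty open subset of $X_i$. Since $X_i\in\{\mathfrak{E},\mathfrak{E}_{\mathrm{c}}\}$ and every nonempty open subset of $\mathfrak{E}$ (resp.\ $\mathfrak{E}_{\mathrm{c}}$) is homeomorphic to $\mathfrak{E}$ (resp.\ $\mathfrak{E}_{\mathrm{c}}$), it follows that $\mathcal{F}_n(X_i)\setminus \mathcal{F}_k(X_i)$ is homeomorphic to $X_i$.

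I expect no serious obstacle here: the argument is a short chain of already-available results, and the only genuine verification, namely that removing $\mathcal{F}_k(X_i)$ leaves an open set, reduces to separating finitely many points by disjoint open sets, which is immediate in a metrizable space. The one point to keep an eye on is non-emptiness, so that the open-subset theorem genuinely applies; this is guaranteed by $k<n$ together with the infiniteness of $\mathfrak{E}$ and $\mathfrak{E}_{\mathrm{c}}$.
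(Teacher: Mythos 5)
Your proposal is correct and follows essentially the same route as the paper: identify $\mathcal{F}_n(X_i)\setminus\mathcal{F}_k(X_i)$ as a nonempty open subset of $\mathcal{F}_n(X_i)\cong X_i$ and invoke the fact that nonempty open subsets of $\mathfrak{E}$ and $\mathfrak{E}_{\mathrm{c}}$ are homeomorphic to the whole space. The paper simply asserts the openness, which you verify explicitly (and you also check non-emptiness, which the paper leaves implicit); these are worthwhile details but not a different argument.
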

\begin{proof}
This follows from Theorems \ref{EE1} and \ref{ee1} and the fact that $\mathcal{F}_{n}(X_i)\setminus \mathcal{F}_{k}(X_i) $ is an open subset of $\mathcal{F}_{n}(X_i)$.
\end{proof}

To finish we will mention some comments about $\mathcal{F}(\mathfrak{E})$, $\mathcal{F}(\mathfrak{E}_{\mathrm{c}})$, $\mathcal{K}(\mathfrak{E})$ and $\mathcal{K}(\mathfrak{E}_{\mathrm{c}})$. 
As $\mathcal{F}(\mathfrak{E}_{\mathrm{c}})$ 
is of the first category and $\mathfrak{E}_{\mathrm{c}}$ is not, then $\mathcal{F}(\mathfrak{E}_{\mathrm{c}})$ cannot be homeomorphic to $\mathfrak{E}_{\mathrm{c}}$. 
Neither does it happen that $\mathcal{F}_n(\mathfrak{E}_{\mathrm{c}})$ is not homeomorphic to $\mathfrak{E}$ because
$\mathcal{F}_n(\mathfrak{E}_{\mathrm{c}})$ is $G_{\delta\sigma}$ and $\mathfrak{E}$ is not. The space $\mathfrak{E}$ is not $G_{\delta\sigma}$ because it contains a closed copy of $\mathfrak{E}$.

On the other hand by Theorems \ref{EE1} and \ref{ee1}, if $X_1=\mathfrak{E}$ or $X_2=\mathfrak{E}_{\mathrm{c}}$, then $X_i^k$ 
is homeomorphic to $\mathcal{F}_{k}(X_i)$ for each $k\in \N$. 
Also we have $\mathcal{F}(X_i)=\bigcup_{k\in \N}\mathcal{F}_{k}(X_i)$ and let $a_i$ be a fixed point in $X_i$ for $i \in \{1,2\}$. For each $k \in\N$ and each $i \in\{1,2\}$, let $Z_k^i$ be the subspace $\{(x_j)_{j\in \omega} \in X_i^\omega: x_j=a_i,  j>k\}$ of $X^\omega_i$. Then $\mathcal{F}_{k}(X_i)$ is homeomorphic to $Z_k^i$ from this the following question arises.
\begin{pre}
Let $X_1=\mathfrak{E}$ and $X_2=\mathfrak{E}_{\mathrm{c}}$. is  $\mathcal{F}(X_i)$ homeomorphic to the subspace
$\bigcup_{k\in \N} Z^k_i$ of $X^\omega_i$ for $i = 1, 2$?
\end{pre}

Also, we note that $\mathcal{K}(\mathfrak{E})$ is homeomorphic to neither $\mathfrak{E} $ nor $\mathfrak{E}_{\mathrm{c}}$, and moreover $\mathcal{K}(\mathfrak{E})$ is not a factor of either spaces.
Because $\mathfrak{E}$ has a closed copy of $\Q$ (see \cite{KKK}), then $\mathcal{K}(\mathfrak{E})$ has a closed copy of $\mathcal{K}(\Q)$. On the other hand it is known that $\mathcal{K}(\Q)$ is not a Borel set (see \cite{KKK}), thus $\mathcal{K}(\mathfrak{E})$ 
is not a Borel set. But $\mathcal{K}(\mathfrak{E}_c)$ is factor of $\mathfrak{E}$ since it is possible to embedd it as a closed subset of $\mathfrak{E}$. So far we only know that space $\mathcal{K}(\mathfrak{E}_c)$ is Polish, $AZD$ and cohesive, what we have to prove is whether $\mathcal{K}(\mathfrak{E}_c)$ satisfy property  $C_1$. So have the fallowing question:

\begin{pre}

Is $\mathcal{K}(\mathfrak{E}_c)$ homeomorphic to $\mathfrak{E}_{\mathrm{c}}$ or $\mathfrak{E}_{\mathrm{c}}^\omega$?
\end{pre}

\end{document}